\newtheorem{theorem}{Theorem}[section]
\newtheorem{de}{Definition}
\newtheorem{lm}{Lemma}[section]
\newtheorem{theor}{Theorem}
\newtheorem{lemma}[theorem]{Lemma}
\numberwithin{equation}{section}
\begin{document}
	
\title{Classification of Frobenius algebra structures on two-dimensional vector space over any base field}
		\author{D.Asrorov$^1$, U.Bekbaev$^2$, I.Rakhimov$^3$}
		
		\thanks{{\scriptsize
				emails: $^1$96asrorovdiorjon@mail.ru; $^2$uralbekbaev@gmail.com; $^3$isamiddin@uitm.edu.my}}
		\maketitle
		\begin{center}
			\address{$^1$Samarkand State University, Samarkand, Uzbekistan\\ $^2$Turin Polytechnic University in Tashkent, Tashkent, Uzbekistan\\ $^3$Universiti Teknologi MARA (UiTM), Shah Alam, Malaysia }
			\end{center}

			\begin{abstract} Classifying Frobenius algebras is a key question that has been addressed in various contexts. The structure of finite-dimensional Frobenius algebras depends on the base field and the dimension of the algebra, leading to different classification results depending on whether the base field has characteristic zero, characteristic $p$, or other properties. Frobenius algebras over fields of characteristic zero have been well-studied, often related to semisimple algebra theory.
The behavior of Frobenius algebras over fields of positive characteristic presents new challenges, with connections to modular representation theory. In the paper, we first classify all associative algebra structures on a two-dimensional vector space over any base field equipped with a non-degenerate bilinear form. We then identify which of these are Frobenius algebras. Lists of canonical representatives of the isomorphism classes of these algebras are provided for a base field with characteristics not equal to two or three, as well as for characteristics two and three.
\end{abstract}
	
\textbf{Keywords:} Frobenius algebra, Non-degenerate bilinear form, Classification, Automorphism

\textbf{Mathematics Subject Classification (MSC)}: Primary 16W20; 16S34; Secondary 20C05

	% ----------------------------------------------------------------
	\section{Introduction}
Frobenius algebras are a fundamental concept in mathematics, appearing in areas such as algebra, topology, and theoretical physics, particularly in the study of categories, representation theory, and quantum field theory. They are named after the German mathematician Ferdinand Georg Frobenius.

%Definition
%A Frobenius algebra is a vector space $A$% over a field $\mathbb{F}$ equipped with a unital associative algebra structure and a nondegenerate bilinear form that is compatible with the algebra structure. More formally:
%
A Frobenius algebra is a vector space $\mathbb{V}$ equipped with:
\begin{itemize}
\item An associative unital algebra structure $\mathbb{A}=(\mathbb{V},\mu,1),$ where $\mu:\mathbb{V}\times \mathbb{V} \longrightarrow \mathbb{V}$ represents the multiplication and $1$ is the unit element.
\item  A nondegenerate bilinear form $\sigma:\mathbb{V}\times \mathbb{V}\longrightarrow \mathbb{F}$, which satisfies the Frobenius property, namely:
$$\sigma(\mu(\mathrm{x, y}),\mathrm{z})=\sigma(\mathrm{x},\mu(\mathrm{y, z}))\ \mbox{for all}\ \mathrm{x,y,z} \in \mathbb{V}.$$
\end{itemize}

Further we use the juxtaposition $\mathrm{x y}$ notation for the product $\mu(\mathrm{x, y}).$  The bilinear form $\sigma$ is called the Frobenius form of the algebra $\mathbb{A}$.

\textbf{Key Properties}:
\begin{itemize}
\item \textbf{Associativity}. The algebra multiplication must satisfy:
$\mathrm{x(yz)=(xy)z}$.
\item \textbf{Nondegeneracy}. If
$\sigma(\mathrm{x,y})=0 $ for all $\mathrm{y} \in \mathbb{V}$ then $\mathrm{x}=0.$
\item \textbf{The Frobenius Property}. The compatibility condition can be written as:
$$\sigma(\mathrm{xy,z})=\sigma(\mathrm{x,yz})\ \mbox{for all}\ \mathrm{x,y,z} \in \mathbb{V}.$$
%making the bilinear form act symmetrically with respect to the multiplication.
%
\item \textbf{Canonical Isomorphisms}. The existence of the non-degenerate bilinear form allows one to map between
$\mathbb{A}$ and its dual $\mathbb{A}^*=\mathrm{Hom}(\mathbb{A},\mathbb{F}).$
This is used in various constructions, such as representations and category theory.
\end{itemize}
\textbf{Examples:}
\begin{itemize}
\item \textbf{Group Algebras}. Let $G$ be a finite group and $\mathbb{F}[G]$  its group algebra over a field $\mathbb{F}$.
\begin{itemize}
\item The standard bilinear form defined by:
$\sigma(g,h)=\delta_{g,h^{-1}}$ (where $\delta$ is the Kronecker delta) gives
 a Frobenius algebra structure on $\mathbb{F}[G]$.
 \item The bilinear form $\sigma(f,g)$ given by the coefficient of the identity element in $f \cdot g$ equips $\mathbb{F}[G]$ by a Frobenius algebra structure.
  \end{itemize}
\item \textbf{Trace Functional}.
\begin{itemize}
\item Let $\mathbb{C}[u]$ be the polynomial ring and
$\mathbb{A}=\mathbb{C}[u]/(u^n)$ its quotient. A standard trace functional can be used to give this algebra the structure of a Frobenius algebra.
\item Any matrix algebra defined over a field $\mathbb{F}$ is a Frobenius algebra with Frobenius form $\sigma(A,B)=\mathrm{tr}(A \cdot B)$, where $\mathrm{tr}(X)$ is the trace of matrix $X$.
\item Any finite-dimensional unital associative algebra $\mathbb{A}$ has a natural homomorphism to its own endomorphism ring $\mathrm{End}(\mathbb{A})$. A bilinear form can be defined on $\mathbb{A}$ in the sense of the previous example. If this bilinear form is non-degenerate, then it equips $\mathbb{A}$ with the structure of a Frobenius algebra.
 \end{itemize}
%
%Representation Rings: Representation rings of finite groups can also exhibit Frobenius algebra structures under appropriate conditions, where the algebra multiplication corresponds to tensoring representations.
%
%Quantum Field Theories: In physics, particularly in topological quantum field theory (TQFT), Frobenius algebras model the algebraic structures underlying certain aspects of quantum theory and string theory.
%
%Generalizations
%Modular Categories and Frobenius Algebras: Frobenius algebras can be used to define modular tensor categories, which have applications in quantum topology and TQFT.
%
%Homological Context: Frobenius algebras connect to homological algebra, particularly in the study of derived categories and Hochschild cohomology.
%
%Noncommutative Frobenius Algebras: These are generalizations of classical Frobenius algebras where commutativity is not required but the Frobenius condition still holds.
\end{itemize}
%\end{exm}
%Frobenius algebras have profound connections to areas such as representation theory, homological algebra, low-dimensional topology, and mathematical physics, playing key roles in the study of structures like TQFTs and topological invariants.
%

%In mathematics, especially in the fields of representation theory and module theory, a Frobenius algebra is a finite-dimensional unital associative algebra with a special kind of bilinear form which gives the algebras particularly nice duality theories.
Frobenius algebras began to be studied in the 1930s by R. Brauer and C. Nesbitt \cite{BrauerNesbitt}. T. Nakayama discovered the beginnings of a rich duality theory \cite{Nakayama1, Nakayama2}. J. Dieudonn\'{e} used this to characterize Frobenius algebras \cite{Dieudonné(1958)}. Frobenius algebras were generalized to quasi-Frobenius rings, those Noetherian rings whose right regular representation is injective. In recent times, interest has been renewed in Frobenius algebras due to connections to Topological Quantum Field Theory. TQFTs are functors from the category of bordisms to the category of vector spaces. It has been found that they play an important role in the algebraic treatment and axiomatic foundation of Topological Quantum Field Theory \cite{LA, TQFT1, Lauda0, Lauda, TQFT2}. Frobenius algebras underlie the algebraic structure of 2D Topological Quantum Field Theory's (TQFT's). They provide a bridge between physics and algebraic topology by encoding information about 2-dimensional surfaces and their invariants. Let us mention a few results, illustrating the importance of the concept. In \cite{DR} the author introduces foundational concepts related to Frobenius algebras in the context of Hopf algebra theory (also see \cite{paul}). M. Atiyah in \cite{A}  discussed the role of Frobenius algebras in the development of TQFT's and first described their axiomatic foundation. The authors of \cite{CIM} present a unified approach to the study of separable and Frobenius algebras.

In the paper, we first classify all associative algebra structures on a two-dimensional vector space over any base field equipped with a non-degenerate bilinear form (Section 3). Then identify which of these are Frobenius algebras (Section 4).

\section{Preliminaries}
Let $\mathbb{A}$ be a PI-algebra, with a given set of polynomial identities $\{P_j[u_1,u_2,...,u_n]=0:\ j\in J\}$, over a field $\mathbb{F}$. Let $P_j[u_1,u_2,...,u_n]=\sum_{i=1}^{k_j}Q^i_j[u_1,u_2,...,u_n] R^i_j[u_1,u_2,...,u_n]$, $j\in J$. In some applications this kind of algebras appear with a non-degenerate bilinear form $\sigma:\mathbb{A}\times \mathbb{A}\rightarrow \mathbb{F}$ such that $\sum_{i=1}^{k_j}\sigma(Q^i_j[a_1,a_2,...,a_n], R^i_j[a_1,a_2,...,a_n])=0$ at all $a_1,a_2,...,a_n\in\mathbb{A}.$ A pair $(\mathbb{A}, \sigma)$ is said to be a Frobenius PI-algebra. The classification of a given class of Frobenius PI-algebras is of great interest. In this paper we consider as a class PI-algebras the class of associative algebras and provide a complete classification of Frobenius PI-algebras on two-dimensional vector space over any base field.

The next theorem establishes the equivalence of several important and useful characterizations of Frobenius algebras (see \cite{CR}).
\begin{theor} The following statements about a
finite-dimensional algebra $\mathbb{A}$ are equivalent:
\begin{itemize}
\item $\mathbb{A}$ is a Frobenius algebra.
\item There exists a non-degenerate bilinear form, $\sigma: \mathbb{A}\times \mathbb{A} \longrightarrow \mathbb{F}$ such that
$\sigma(\mathrm{xy, z}) =\sigma(\mathrm{x, yz})$ for all $\mathrm{x,y,z} \in \mathbb{A}$.
\item There exists a linear function $\lambda \in \mathbb{A}^*$ whose kernel contains no left or right ideals different from zero.
\item For all left ideals $L$ and right ideals $R$ in $\mathbb{A}$ we have
$$l(r(L)) = L,\ \mbox{and}\ (r(L):\mathbb{F}) + (L:\mathbb{F}) =(\mathbb{A}:\mathbb{F});$$
$$r(l(R)) =R, \ \mbox{and}\ (l(R): \mathbb{F}) + (R: \mathbb{F}) =(\mathbb{A}: \mathbb{F}),$$
where $r(L)=\{\mathrm{x} \in \mathbb{A}: L\mathrm{x}=0\}$ and $l(R)=\{\mathrm{x} \in \mathbb{A}: \mathrm{x}R=0\}$ are right and left annihilators, respectively.
\end{itemize}
\end{theor}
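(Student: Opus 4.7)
\emph{Proof proposal.} The plan is to prove (1)$\Leftrightarrow$(2), which is a direct restatement of the definition, and then the chain (2)$\Leftrightarrow$(3)$\Leftrightarrow$(4). Throughout, the workhorse is the bijection between Frobenius bilinear forms $\sigma$ and linear functionals $\lambda\in\mathbb{A}^*$ given by $\lambda(x)=\sigma(x,1)$ and, inversely, $\sigma(x,y)=\lambda(xy)$; unitality and the Frobenius property $\sigma(xy,z)=\sigma(x,yz)$ make these formulas mutually inverse.

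For (2)$\Leftrightarrow$(3) I would show that under the above bijection, the left radical $\{x:\sigma(x,\mathbb{A})=0\}$ of $\sigma$ is exactly the largest right ideal of $\mathbb{A}$ contained in $\ker\lambda$, and dually the right radical is the largest left ideal in $\ker\lambda$. The key observation is that $x\mathbb{A}$ is itself a right ideal containing $x=x\cdot 1$, so $\sigma$ is non-degenerate precisely when $\ker\lambda$ contains no nonzero one-sided ideal.

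For (3)$\Rightarrow$(4) the crucial lemma is the identification $L^{\perp}=r(L)$ for every left ideal $L$, with $L^{\perp}=\{x:\sigma(L,x)=0\}$. The inclusion $r(L)\subseteq L^{\perp}$ is immediate; for the reverse, note that $Lx$ is itself a \emph{left} ideal (since $\mathbb{A}\cdot Lx=(\mathbb{A}L)x\subseteq Lx$), so $\lambda(Lx)=0$ forces $Lx=0$ by (3). Finite-dimensional non-degeneracy of $\sigma$ then yields both the codimension formula $\dim L+\dim r(L)=\dim\mathbb{A}$ and the double-orthogonality identity $L^{\perp\perp}=L$, which together supply the displayed equalities of (4); the right-ideal statements are symmetric.

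The main obstacle will be the reverse direction (4)$\Rightarrow$(3): producing a functional $\lambda$ whose kernel contains no nonzero one-sided ideal using only the annihilator equalities in (4). The strategy is to look for a hyperplane avoiding every minimal one-sided ideal (there are finitely many in the Artinian setting); the dimension equalities in (4) force enough room for such a hyperplane to exist, and any $\lambda$ cutting it out returns us to (3). Apart from this step, the remaining work is routine left/right bookkeeping inside a non-degenerate pairing on a finite-dimensional space.
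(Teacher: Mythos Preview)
The paper does not actually prove this theorem: it is quoted as background from Curtis--Reiner \cite{CR}, with no argument given. So there is no ``paper's own proof'' to compare your proposal against.

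That said, your outline is largely sound through (1)$\Leftrightarrow$(2)$\Leftrightarrow$(3) and (3)$\Rightarrow$(4); the correspondence $\sigma\leftrightarrow\lambda$ and the identifications $r(L)=L^{\perp}$, $l(R)={}^{\perp}R$ are exactly the right tools, and non-degeneracy plus finite dimension then give the dimension and double-annihilator formulas cleanly.

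The genuine gap is in (4)$\Rightarrow$(3). Your plan is to pick a hyperplane missing every minimal one-sided ideal, justified by the claim that ``there are finitely many in the Artinian setting.'' That claim is false: already $M_2(\mathbb{F})$ over an infinite field has infinitely many minimal left ideals (one for each rank-one idempotent), so a naive avoidance argument cannot work, and the vague appeal to ``the dimension equalities in (4) force enough room'' is not a proof. The classical route (as in Curtis--Reiner, Theorem~61.3) does not try to build $\lambda$ by avoidance; instead one uses the annihilator duality in (4) to exhibit an $\mathbb{A}$-module isomorphism $\mathbb{A}\cong\mathbb{A}^{*}$ (equivalently, a Nakayama permutation on the indecomposable projectives), from which the Frobenius functional drops out. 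If you want to repair your direction, you should replace the finiteness claim by an argument at the level of the socle and indecomposable summands, or simply follow the module-isomorphism approach.
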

\begin{de} Let $(\mathbb{A},\sigma)$ and $(\mathbb{B},\tau)$ be Frobenius algebras on a vector space $\mathbb{V}$ over a field $\mathbb{F}$ with non-degenerate bilinear forms $\sigma:\mathbb{A}\times \mathbb{A}\rightarrow \mathbb{F}$ and $\tau:\mathbb{B}\times \mathbb{B}\rightarrow \mathbb{F}$, respectively. Pairs $(\mathbb{A},\sigma)$ and $(\mathbb{B},\tau)$ are said to be isomorphic if there exists isomorphism of algebras $f:\mathbb{A}\longrightarrow \mathbb{B}$  such that
$\sigma(\mathrm{xy})=\tau(f(\mathrm{x})f(\mathrm{y}))$ for all $\mathrm{x,y}\in \mathbb{A}$.
\end{de}

Let $\mathbb{A}$ be an $n$-dimensional algebra and $\mathbf{e}=(e_1,e_2,...,e_n)$ its basis. Then $\mathrm{x}$ and $\mathrm{y}$ can be presented by their coordinate vectors $x^T=(x_1,x_2,...,x_n)$ and $y^T=(y_1,y_2,...,y_n)$ as
$$\mathrm{x}=\mathbf{e}x^T, \mathrm{y}=\mathbf{e}y^T\ \mbox{and}\ \mathrm{z}=\mathbf{e}z^T, \ \mbox{respectevely}.$$
Therefore, $\mathrm{xy}=\mathbf{e}A(x\otimes y),$ where
$$A=\left(\begin{array}{ccccccccccccc}a_{11}^1&a_{12}^1&...&a_{1n}^1&a_{21}^1&a_{22}^1&...&a_{2n}^1&...&a_{n1}^1&a_{n2}^1&...&a_{nn}^1\\ a_{11}^2&a_{12}^2&...&a_{1n}^2&a_{21}^2&a_{22}^2&...&a_{2n}^2&...&a_{n1}^2&a_{n2}^2&...&a_{nn}^2 \\
...&...&...&...&...&...&...&...&...&...&...&...&...\\ a_{11}^n&a_{12}^n&...&a_{1n}^n&a_{21}^n&a_{22}^n&...&a_{2n}^n&...&a_{n1}^n&a_{n2}^n&...&a_{nn}^n\end{array}\right)$$ defined by:
$$\mathrm{e}_i  \mathrm{e}_j=\sum\limits_{k=1}^n a_{ij}^k\mathrm{e}_k, \ \mbox{where}\ i,j=1,2,...,n.$$
The matrix $A$ is said to be the matrix of structure constants (MSC) of $\mathbb{A}$ on the basis $\mathbf{e}$.

If $\mathbb{A}$ is a Frobenius algebra then the Frobenius map $\sigma$ also is presented by its matrix $S$: $\sigma(\mathrm{x,y})=x^TSy$. Then
$$\sigma\mathrm{(xy,z})=(x^T\otimes y^T)A^TSz \ \mbox{and}\ \sigma\mathrm{(x,yz})=x^TSA(y\otimes z).$$ Therefore, one has
\begin{lm} \label{FA}
An algebra $\mathbb{A}$ is Frobenius if and only if \begin{equation}(x^T\otimes y^T)A^TSz=x^TSA(y\otimes z).\end{equation}
\end{lm}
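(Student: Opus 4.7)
The plan is to translate the Frobenius property $\sigma(\mathrm{xy},\mathrm{z}) = \sigma(\mathrm{x},\mathrm{yz})$ directly into the fixed basis $\mathbf{e}$ using the coordinate identifications already set up before the statement. The argument is essentially a bookkeeping computation, powered by the observation that the map sending an element of $\mathbb{A}$ to its coordinate vector is a bijection with $\mathbb{F}^n$, so the Frobenius property holding for all $\mathrm{x},\mathrm{y},\mathrm{z} \in \mathbb{A}$ is equivalent to the corresponding scalar identity holding for all coordinate vectors $x,y,z \in \mathbb{F}^n$. This bijection is what gives both directions of the ``if and only if'' simultaneously.

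First, I would evaluate $\sigma(\mathrm{xy},\mathrm{z})$. Since $\mathrm{xy} = \mathbf{e}A(x \otimes y)$, the coordinate vector of $\mathrm{xy}$ is $A(x \otimes y)$, and hence $\sigma(\mathrm{xy},\mathrm{z}) = (A(x \otimes y))^T S z$. Applying $(PQ)^T = Q^T P^T$ and the tensor-transpose rule $(u \otimes v)^T = u^T \otimes v^T$ for column vectors, this simplifies to $(x^T \otimes y^T) A^T S z$, which is exactly the left-hand side of the claimed identity.

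Next, I would evaluate $\sigma(\mathrm{x},\mathrm{yz})$. The coordinate vector of $\mathrm{yz}$ is $A(y \otimes z)$, which directly gives $\sigma(\mathrm{x},\mathrm{yz}) = x^T S A(y \otimes z)$, the right-hand side of the claimed identity. Equating the two expressions and quantifying over all $x,y,z \in \mathbb{F}^n$ yields the stated matrix-level equation, and the converse follows because every triple of elements of $\mathbb{A}$ arises in this way.

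There is no real obstacle; the only step that is worth stating cleanly is the algebraic identity $(A(x \otimes y))^T = (x^T \otimes y^T) A^T$, which is a routine instance of the transpose-of-a-product rule combined with the tensor-transpose rule for column vectors. Everything else is substitution of the coordinate expressions into the bilinear form $\sigma(\cdot,\cdot) = (\cdot)^T S (\cdot)$.
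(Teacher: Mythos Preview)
Your proposal is correct and follows essentially the same approach as the paper: the paper simply records the coordinate computations $\sigma(\mathrm{xy},\mathrm{z})=(x^T\otimes y^T)A^TSz$ and $\sigma(\mathrm{x},\mathrm{yz})=x^TSA(y\otimes z)$ immediately before the lemma and concludes with ``Therefore, one has,'' which is exactly the substitution-and-transpose argument you spell out.
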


 In \cite{Rakhimov} a result on the classification of two-dimensional associative algebras, over any base field $\mathbb{F}$ and their automorphism groups was given. The result was based on technique applied in \cite{UB}, we keep here the notations of \cite{UB} as the following theorems for the characteristic of $\mathbb{F}$ to be not two and three, to be two and to be three, respectively.
 	 	 	\begin{theor} \label{Char0} Any non-trivial $2$-dimensional associative algebra over a field $\mathbb{F}$ $(Char(\mathbb{F})\neq 2,3)$ is isomorphic to only one of the following listed, by their matrices of structure constants, algebras:
 		\begin{itemize}
 			\item $A_3(1,0,0)=\begin{pmatrix}
 			1&0&0&0 \\
 			0&0 &0& 0
 			\end{pmatrix},$
 			\item $A_3(1,0,1)=\begin{pmatrix}
 				1&0&0&0 \\
 				0&1&0& 0
 			\end{pmatrix},$
 			\item $A_3(\frac{1}{2},0,0)=\begin{pmatrix}
 			\frac{1}{2}&0&0&0 \\
 			0&0 &\frac{1}{2}& 0
 			\end{pmatrix},$ 			 			
 			\item $A_3(\frac{1}{2},\alpha_4,\frac{1}{2})=\begin{pmatrix}
 			\frac{1}{2}&0&0&\alpha_4  \\
 			0&\frac{1}{2}&\frac{1}{2}& 0
 			\end{pmatrix}\simeq\begin{pmatrix}
 			\frac{1}{2}&0&0&r^2\alpha_4 \\
 			0&\frac{1}{2}& \frac{1}{2}& 0
 			\end{pmatrix},$\ where  $ \alpha_4, r\in\mathbb{F}$, $r\neq 0$,
 			 			\item $A_{13}=\begin{pmatrix}
 			0&0&0&0 \\
 			1&0&0&0
 			\end{pmatrix}.
 			$
 		\end{itemize}
 		
 		 Among the listed above algebras only $A_3(\frac{1}{2},\alpha_4,\frac{1}{2})$ is unital and its unit element is $\mathbf{1}=\begin{pmatrix}
 		2\\
 		0
 	\end{pmatrix}.$
 	\end{theor}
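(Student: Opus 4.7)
The plan is to describe an arbitrary associative product on $\mathbb{V}=\mathbb{F}^2$ by its matrix of structure constants $A$ on a fixed basis $\mathbf{e}=(e_1,e_2)$, to impose associativity as a polynomial system in the eight entries $a_{ij}^k$, and then to reduce every solution to one of the five listed normal forms using the $GL_2(\mathbb{F})$-action induced by change of basis. Under $\mathbf{e}'=\mathbf{e}g$ with $g\in GL_2(\mathbb{F})$, the MSC transforms as $A'=g^{-1}A(g\otimes g)$, and two MSCs describe isomorphic algebras precisely when they lie in the same orbit of this action.

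First I would expand $(e_ie_j)e_k=e_i(e_je_k)$ over all $(i,j,k)\in\{1,2\}^3$, obtaining eight polynomial relations (with redundancies) among the $a_{ij}^k$. The cleanest way to branch is by orbit invariants of $A$ — for example by the rank of right multiplication by a generic element, or by the dimension of the subspace spanned by $\{e_1^2,e_1e_2,e_2e_1,e_2^2\}$. Within each branch I would pick a favorable $g$ that zeros out as many entries as possible and scales a chosen nonzero entry to $1$, then substitute back into the remaining associativity equations to determine the surviving parameters. The factors $\tfrac{1}{2}$ in the canonical list arise from symmetrizing a multiplication table around an idempotent, which requires invertibility of $2$; the completion of squares or normalization of certain cubic terms during the reduction additionally requires invertibility of $3$, hence the hypothesis $\mathrm{char}(\mathbb{F})\neq 2,3$.

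The main obstacle is organising the case tree so that it is both exhaustive and non-redundant. For exhaustiveness I would record, at each leaf, which entries of $A$ have been pinned down and which associativity relations have been consumed; for non-redundancy I would compute a small battery of orbit invariants — existence and location of a unit, dimension of the annihilator, commutativity, and existence of a non-trivial idempotent — and verify that the five listed representatives are pairwise distinguished by them. The family $A_3(\tfrac{1}{2},\alpha_4,\tfrac{1}{2})$ is the only one that retains a continuous parameter, and its moduli relation $\alpha_4\sim r^2\alpha_4$ is deduced by computing the stabilizer of the rest of the MSC inside $GL_2(\mathbb{F})$ and observing that the surviving one-parameter freedom $r\in\mathbb{F}^\times$ acts on $\alpha_4$ through $\alpha_4\mapsto r^2\alpha_4$. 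Finally, the asserted unit $\mathbf{1}=(2,0)^T$ in that family is checked by a direct computation $\mathbf{1}\cdot e_i=e_i$, while for the other four algebras the invariant computation above already rules out any unit.
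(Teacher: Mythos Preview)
The paper does not prove this theorem: it is quoted from \cite{Rakhimov}, based on the technique of \cite{UB}, and the present text simply imports the statement (together with Theorems~\ref{Char2}--\ref{AutChar3}) as background for the Frobenius-form classification in Sections~3 and~4. So there is no proof in the paper against which to compare your attempt.

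That said, your plan is exactly the standard route taken in those cited works: parametrise the product by the eight structure constants, impose associativity as a polynomial system, and reduce modulo the $GL_2(\mathbb{F})$-action $A\mapsto g^{-1}A(g\otimes g)$ through a case tree organised by invariants (idempotents, one-sided units, annihilators, commutativity). Your derivation of the moduli relation $\alpha_4\sim r^2\alpha_4$ via the residual stabiliser, and the direct check of the unit $2e_1$, are on target. The only caveat is that what you have written is a programme rather than a proof: the entire content of the theorem lies in actually executing the branch-by-branch reduction, and until the associativity equations are written out and each branch is carried to a normal form and shown to land in the list, the argument remains a sketch. One minor inaccuracy worth noting: your explanation that invertibility of $3$ is needed to ``normalize certain cubic terms'' is speculative --- comparing Theorem~\ref{Char0} with Theorem~\ref{Char3} shows that the list of algebras is literally the same in characteristic~$3$ (with $2=2^{-1}$ replacing $\tfrac12$); what differs is the automorphism groups, which is why the cases are separated for the purposes of the later bilinear-form analysis, not for the algebra classification itself.
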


\begin{theor} \label{Char2} Any non-trivial $2$-dimensional associative algebra over a field $\mathbb{F},$ $(Char(\mathbb{F})=2)$ is isomorphic to only one of the following listed by their matrices of structure constants, such algebras:

\begin{itemize}
\item $A_{3,2}(1,0,0)=$ $\left(\begin{array}{ccccc} 1&0&0&0\\
0&0&0&0
\end{array}\right),$
\item $A_{3,2}(1,0,1)=$ $\left(\begin{array}{ccccc} 1&0&0&0\\
0&1&0&0
\end{array}\right),$
\item  $A_{4,2}(1,\beta_1,0)=\left(\begin{array}{ccccc} 1&1&1&0\\
\beta_1&0&0&1
\end{array}\right)\ \cong \ \left(\begin{array}{ccccc} 1&1&1&0\\\beta_1+r+r^2&0&0&1
\end{array}\right),\ \mbox{where}\ r, \beta_1 \in \mathbb{F},$
\item
$A_{6,2}(1,0)=\left(\begin{array}{ccccc} 1&0&0&0\\
0&0&1&0
\end{array}\right),$
\item
$A_{11,2}(\beta_1)=\left(\begin{array}{ccccc} 0&1&1&0\\
\beta_1&0&0&1
\end{array}\right)\ \cong \ \left(\begin{array}{ccccc} 0&1&1&0\\
q^2\beta_1+r^2&0&0&1
\end{array}\right), \ \mbox{where}\ \beta_1, r, q \in \mathbb{F},  q\neq 0,$
\item $A_{12,2}=
\left(\begin{array}{ccccc} 0&0&0&0\\
1&0&0&0
\end{array}\right).$
\end{itemize}
Among the algebras listed above only
$A_{4,2}(1,\beta_1,0)$ and  $A_{11,2}(\beta_1)$ are
unital and theirs unit element is ${\bf 1}=\begin{pmatrix}
0\\
1
\end{pmatrix}.$

\end{theor}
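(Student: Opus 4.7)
The plan is to parameterize a multiplication on the two-dimensional space by the $2\times 4$ matrix of structure constants $A$ as in Lemma \ref{FA}, then impose the associativity identities $(\mathrm{e}_i \mathrm{e}_j)\mathrm{e}_k = \mathrm{e}_i(\mathrm{e}_j \mathrm{e}_k)$ for $i,j,k\in\{1,2\}$. This produces a finite system of quadratic equations in the eight entries $a_{ij}^{\ell}$, cutting out the variety of 2-dimensional associative algebras. Next, I would act on $A$ by change of basis: if $g\in GL_2(\mathbb{F})$ and $\mathbf{e}'=\mathbf{e}g^{-1}$, the new MSC is $A'=gA(g\otimes g)$. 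The task is then to choose, in each orbit, a convenient canonical representative.

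In characteristic $2$ several reductions that worked in the $Char(\mathbb{F})\neq 2,3$ case of Theorem \ref{Char0} break down, notably the splitting of the multiplication into symmetric and antisymmetric parts. Instead, a basis shift $\mathrm{e}_2 \mapsto \mathrm{e}_2 + r\,\mathrm{e}_1$ produces an Artin-Schreier-type residue: in $A_{4,2}(1,\beta_1,0)$ the parameter $\beta_1$ can be transported to $\beta_1+r+r^2$ but no further, while a scaling $\mathrm{e}_1 \mapsto q\,\mathrm{e}_1$ coupled with a shift produces the $q^2\beta_1+r^2$ equivalence visible in $A_{11,2}(\beta_1)$. Following the technique of \cite{UB} and \cite{Rakhimov}, I would split into cases according to the rank and nilpotence structure of left multiplication by $\mathrm{e}_1$ and $\mathrm{e}_2$, and within each case solve the associativity equations and normalize using the remaining basis freedom. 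This yields exactly the six families listed.

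The main obstacle is proving that the stated families are pairwise non-isomorphic and that, within each parametrized family, the listed equivalence is sharp, i.e., two values $\beta_1,\beta_1'$ giving isomorphic algebras must differ by the prescribed Artin-Schreier or square-class coboundary. This requires computing the stabilizer of each canonical $A$ under the $GL_2(\mathbb{F})$-action and tracking how the residual freedom acts on the residual parameter; in particular the coupling between $q$ and $r$ imposed by associativity in the $A_{11,2}$ case must be handled with care. The unital claim is then a short final step: one sets $\mathbf{1}=(u_1,u_2)^T$ and solves $A(\mathbf{1}\otimes e_i)=e_i=A(e_i\otimes\mathbf{1})$ against each canonical MSC; a solution exists only for $A_{4,2}(1,\beta_1,0)$ and $A_{11,2}(\beta_1)$, and in both cases it is forced to be $\mathbf{1}=(0,1)^T$.
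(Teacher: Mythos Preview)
The paper does not prove Theorem \ref{Char2}; it is quoted verbatim from \cite{Rakhimov}, which in turn relies on the technique of \cite{UB}, as background for the later Frobenius classification. Your outline---parameterize by the MSC $A$, impose the associativity equations on the structure constants, act by $GL_2(\mathbb{F})$ via change of basis, and normalize within each orbit, paying attention to the Artin--Schreier--type residues that replace quadratic completions in characteristic $2$---is precisely the method the paper attributes to those references, so there is nothing in the paper itself to compare against beyond that attribution.

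Two small remarks. First, your change-of-basis formula is off: with the paper's convention $\mathrm{xy}=\mathbf{e}A(x\otimes y)$ and $\mathbf{e}'=\mathbf{e}g^{-1}$, the new MSC is $A'=gA(g^{-1}\otimes g^{-1})$, not $gA(g\otimes g)$; this slip would corrupt every explicit orbit computation if carried through, so fix it before doing the case analysis. Second, what you call ``the main obstacle'' (pairwise non-isomorphism and sharpness of the parameter equivalences) is in fact equivalent to computing the automorphism groups listed in Theorem \ref{AutChar2}, which the paper also imports from \cite{Rakhimov}; once you have those stabilizers, the residual action on $\beta_1$ is read off directly, so the obstacle and the automorphism computation are one and the same task.
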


\begin{theor} \label{Char3} Any non-trivial $2$-dimensional associative algebra over a field $\mathbb{F},$ $(Char(\mathbb{F})=3)$ is isomorphic to only one of the following listed by their matrices of structure constants, such algebras:

\begin{itemize}
\item
    $A_{3,3}(1,0,0)=\left(\begin{array}{ccccc} 1&0&0&0\\
0&0&0&0
\end{array}\right),$
\item
    $A_{3,3}(1,0,1)=\left(\begin{array}{ccccc} 1&0&0&0\\
0&1&0&0
\end{array}\right),$
\item
    $A_{3,3}(2,0,0)=\left(\begin{array}{ccccc} 2&0&0&0\\
0&0&2&0
\end{array}\right),$
\item $A_{3,3}(2,\alpha_4,2):=\left(\begin{array}{ccccc} 2&0&0&\alpha_4\\
0&2&2&0
\end{array}\right)\simeq \begin{pmatrix}
 		2&0&0&r^2\alpha_4 \\
 		0& 2& 2& 0
 		\end{pmatrix},$ where $ \alpha_4, r \in\mathbb{F}, r\neq 0$,
 		\item $A_{13,3}=\left(\begin{array}{ccccc} 0&0&0&0\\
 		1&0&0&0
 		\end{array}\right)$.
 		
\end{itemize}
Here only $A_{3,3}(2,\alpha_4,2)$ is unital and its unit element is ${\bf 1}=\begin{pmatrix}
2\\
0
\end{pmatrix}.$

\end{theor}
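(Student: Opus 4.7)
The plan is to classify all associative multiplications on a fixed 2-dimensional vector space over $\mathbb{F}$ with $\mathrm{Char}(\mathbb{F})=3$, up to change of basis by $GL_2(\mathbb{F})$. I would parameterize a general bilinear multiplication by the MSC
$$A=\begin{pmatrix}\alpha_1&\alpha_2&\alpha_3&\alpha_4\\ \beta_1&\beta_2&\beta_3&\beta_4\end{pmatrix},$$
so that $e_ie_j$ is encoded column-by-column, and write down the eight associativity conditions $(e_ie_j)e_k=e_i(e_je_k)$ for $i,j,k\in\{1,2\}$. This yields a system of polynomial equations in the eight parameters, which after eliminating dependent equations gives a semialgebraic variety of associative structures.

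Next I would work out the $GL_2(\mathbb{F})$-action explicitly: under $\mathbf{e}'=\mathbf{e}g$ the MSC transforms as $A'=g^{-1}A(g\otimes g)$. Writing $g=\bigl(\begin{smallmatrix} g_{11}&g_{12}\\ g_{21}&g_{22}\end{smallmatrix}\bigr)$ with $\det g\neq 0$, this expresses the $(\alpha'_i,\beta'_i)$ as explicit polynomials in $(g_{ij})$ and $(\alpha_i,\beta_i)$. I would then single out natural invariants of the action (the trace and rank of the left and right multiplication operators by $e_1,e_2$; the dimension of $\mathbb{A}^2$ and of the annihilator; existence of an idempotent or of a unit), and use them to split the variety into disjoint $GL_2$-invariant strata.

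Within each stratum I would use the residual $GL_2$ freedom to normalize coordinates. The key case is the unital one: if a unit $\mathbf{1}$ exists I would choose a basis in which $\mathbf{1}=\binom{2}{0}$ (possible in characteristic 3 because $2$ is invertible) and exploit commutativity forced by the Frobenius-compatible setup to reduce to the form $A_{3,3}(2,\alpha_4,2)$. The residual stabilizer of $\binom{2}{0}$ consists of scalings $e_2\mapsto re_2$, which multiply $\alpha_4$ by $r^2$, yielding the stated equivalence $\alpha_4\sim r^2\alpha_4$. The nonunital strata are handled by choosing a basis adapted to a nonzero idempotent or to a generator of the radical, and checking that no further invariant separates different parameter values.

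The main obstacle will be the case analysis itself: characteristic $3$ differs from the generic case because $\tfrac{1}{2}=2=-1$ and $\tfrac{1}{3}$ does not exist, so several identities used in the characteristic $\neq 2,3$ argument collapse or must be replaced. In particular, arguments that symmetrize a trilinear expression by dividing by $3$ are unavailable, so I would have to use the Frobenius relation and direct associativity book-keeping to rule out unexpected isomorphisms between the candidates listed, and to show that no further normal form besides the five in the statement survives. Once this is done, verifying that the listed representatives are pairwise non-isomorphic (by comparing invariants) and that the unital element is as claimed are short checks.
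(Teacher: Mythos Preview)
The paper does not prove this theorem at all: it is quoted as a preliminary result from \cite{Rakhimov} (building on the technique of \cite{UB}), together with the companion Theorems~\ref{Char0}, \ref{Char2} and the automorphism-group Theorems~\ref{AutChar0}--\ref{AutChar3}. So there is no ``paper's own proof'' to compare your attempt against; the authors simply import the classification and then use it.

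Your outline---parameterize the MSC, impose the associativity relations, act by $GL_2(\mathbb{F})$ via $A\mapsto g^{-1}A(g\otimes g)$, stratify by invariants, and normalize within each stratum---is the standard route and is essentially what the cited references carry out. Two points, however, are off. First, the Frobenius form plays no role whatsoever in this theorem: the statement concerns bare associative algebras, and the bilinear form $\sigma$ enters only later, in Section~3 onward. So phrases like ``commutativity forced by the Frobenius-compatible setup'' and ``use the Frobenius relation \ldots\ to rule out unexpected isomorphisms'' are misplaced. In the unital case commutativity is automatic for a purely algebraic reason: a $2$-dimensional unital algebra is spanned by $\{\mathbf{1},x\}$, and $x\cdot x$ is already a linear combination of these, so there is nothing non-commuting left. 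Second, your remark that ``arguments that symmetrize a trilinear expression by dividing by $3$ are unavailable'' is not really the obstacle here; the characteristic-$3$ peculiarities in this classification are more mundane (certain normalizations that set a coefficient to $\tfrac{1}{2}$ in the generic case instead produce the coefficient $2$, and the stabilizer computations change accordingly), not a failure of polarization. If you strip out the Frobenius references and carry the case analysis through, your plan would reproduce the cited classification.
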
	

\begin{theor} \label{AutChar0} The automorphism groups of the algebras listed in Theorem \ref{Char0} are given as follows\\	
	$Aut(A_3(1,0,0))=\left\{\begin{pmatrix}
	1&0\\ 0&t
	\end{pmatrix}:\ 0\neq t\in \mathbb{F}\right\},$\\	
	$Aut(A_3(1,0,1))=\left\{\begin{pmatrix}
	1&0\\ s&t
	\end{pmatrix}:\ s, t\in \mathbb{F},\ t\neq 0\right\},$\\	
	$Aut(A_3(\frac{1}{2},0,0))=\left\{\begin{pmatrix}
	1&0\\ s&t
	\end{pmatrix}:\ s, t\in \mathbb{F},\ t\neq 0\right\},$\\	
	$Aut(A_3(\frac{1}{2},0,\frac{1}{2}))=\left\{\begin{pmatrix}
	1&0\\ 0&t
	\end{pmatrix}: \ t\neq 0\right\}$, \\	
	$Aut(A_3(\frac{1}{2},\alpha_4,\frac{1}{2}))= \left\{\begin{pmatrix}
	1&0\\ 0&\pm1
	\end{pmatrix}\right\}$, whenever $\alpha_4\neq 0$,\\	
	$Aut(A_{13})=\left\{\begin{pmatrix}
	p&0\\ s&p^2
	\end{pmatrix}:\ p, s\in \mathbb{F}, p\neq 0\right\}.$\end{theor}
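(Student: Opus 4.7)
The plan is to reduce automorphism-finding to a concrete matrix equation and then dispatch the six cases individually. For a $2$-dimensional algebra with MSC $A$ on basis $\mathbf{e}$, a linear map $f$ represented by $g=\begin{pmatrix} a & b\\ c & d\end{pmatrix}\in GL_2(\mathbb{F})$ is an algebra automorphism if and only if
$$gA=A(g\otimes g),$$
which one verifies by expanding $f(\mathrm{xy})=f(\mathrm{x})f(\mathrm{y})$ via the formula $\mathrm{xy}=\mathbf{e}A(x\otimes y)$ from the preliminaries. Equivalently, one writes $f(e_1)=ae_1+ce_2$, $f(e_2)=be_1+de_2$ and requires $f(e_ie_j)=f(e_i)f(e_j)$ for the four pairs $(i,j)$, under the invertibility constraint $ad-bc\neq 0$.

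For the two unital algebras $A_3(\tfrac{1}{2},0,\tfrac{1}{2})$ and $A_3(\tfrac{1}{2},\alpha_4,\tfrac{1}{2})$, I would first exploit the fact that any automorphism must fix the unit $\mathbf{1}=(2,0)^T$ given in Theorem \ref{Char0}; this immediately forces $a=1$ and $c=0$, reducing the problem to the two unknowns $b,d$. In the first case the only remaining relation $f(e_2)^2=0$ yields $b=0$ with $d\neq 0$ arbitrary. In the second case (where $\alpha_4\neq 0$), the relation $f(e_2)^2=\alpha_4 e_1$ produces $bd=0$ together with $\alpha_4 d^2=\alpha_4$, whence $b=0$ and $d=\pm 1$.

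For the non-unital algebras I would expand the defining relations directly on the basis. In $A_3(1,0,0)$ the idempotent equation $f(e_1)^2=f(e_1)$ forces $a=1$, $c=0$, after which $f(e_1)f(e_2)=0$ forces $b=0$, leaving $d$ free and nonzero. In $A_3(1,0,1)$ the same idempotent equation gives $a=1$ with $c$ unconstrained, and $f(e_2)f(e_1)=0$ forces $b=0$. The case $A_3(\tfrac{1}{2},0,0)$ is entirely analogous to $A_3(1,0,0)$ except that $c$ is left unrestricted by the relation $f(e_2)f(e_1)=\tfrac{1}{2}f(e_2)$. Finally, for the nilpotent algebra $A_{13}$ the only nontrivial relation is $f(e_1)^2=f(e_2)$, which expands to $a^2 e_2=be_1+de_2$; this gives $b=0$ and $d=a^2$, with invertibility forcing $a\neq 0$ and $c$ unconstrained, while the three remaining products are automatically zero on both sides.

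The main obstacle is purely organizational: each case is a small nonlinear polynomial system in $a,b,c,d$ which one must solve while carrying the invertibility constraint $ad-bc\neq 0$ to discard non-bijective solutions. No deeper input is needed beyond the multiplication tables themselves, but one must check both directions---that the listed matrices do satisfy $gA=A(g\otimes g)$ and that no stray solutions exist outside the stated families---and verify that each resulting matrix set is closed under multiplication and inversion, hence a subgroup of $GL_2(\mathbb{F})$.
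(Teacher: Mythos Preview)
Your approach is correct and is exactly the standard way one computes these automorphism groups: write $g=\begin{pmatrix}a&b\\c&d\end{pmatrix}$, impose $f(e_i)f(e_j)=f(e_ie_j)$ for all four pairs, and solve under the constraint $ad-bc\neq 0$. The case-by-case calculations you outline all check out, including the shortcut of using the unit to force $a=1$, $c=0$ in the two unital cases.

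Note, however, that the paper does not supply its own proof of this theorem at all: Theorems~\ref{Char0}--\ref{AutChar3} are quoted from \cite{Rakhimov} (based on the technique of \cite{UB}) and stated without argument. So there is nothing in the paper to compare your proof against; your direct computation is the natural way to substantiate the citation, and it would stand on its own.

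Two small expository quibbles that do not affect correctness: in $A_3(1,0,1)$ the relation $f(e_1)^2=f(e_1)$ already leaves $c$ free (because $e_1e_2=e_2$ contributes an $ac\,e_2$ term matching $c\,e_2$), rather than this freedom arising later; and in $A_3(\tfrac12,0,0)$ the situation is not quite ``entirely analogous to $A_3(1,0,0)$'' for the same reason---the $e_1^2$ relation there does not pin down $c$. Finally, the closure-under-composition check you mention at the end is redundant, since $\mathrm{Aut}(\mathbb{A})$ is automatically a subgroup of $GL_2(\mathbb{F})$.
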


\begin{theor}  \label{AutChar2} The automorphism groups of the algebras listed in Theorem \ref{Char2} are given as follows\\		
$Aut(A_{3,2}(1,0,0))=\left\{ \left(\begin{array}{crr} 1 & 0\\ 0 &t \end{array}\right): \ 0 \neq t \in \mathbb{F}\right\},$\\
$Aut(A_{3,2}(1,0,1))=\left\{ \left(\begin{array}{crr} 1 & 0\\ s & t \end{array}\right): \ s, t \in \mathbb{F},\ t \neq 0\right\},$\\
$Aut(A_{4,2}(1,0,0))=\left\{ \left(\begin{array}{crr} p & 0\\ s & 1 \end{array}\right), \ p \neq 0, \ s \in \mathbb{F} \right\},$\\
$Aut(A_{4,2}(1,\beta_1,0))=\left\{ \left(\begin{array}{crr} 1 & 0\\ s & 1 \end{array}\right): \ s \in \mathbb{F} \right\},$ \ whenever $\beta_1\neq 0$,\\
$Aut(A_{6,2}(1,0))=\left\{ \left(\begin{array}{crr} 1 & 0\\ 0 & 1 \end{array}\right)\right\},$\\
	$Aut(A_{11,2}(\beta_1))=\left\{ \left(\begin{array}{crr} p & 0\\ \beta_1(p-1) & 1 \end{array}\right): \  \beta_1,p \in \mathbb{F}, \ p \neq 0 \right\}$,\\
$Aut(A_{12,2})=\left\{ \left(\begin{array}{lrr} p & 0\\ s & p^2 \end{array}\right): \  p, s\in \mathbb{F}, p\neq 0 \right\}.$

\end{theor}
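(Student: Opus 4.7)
The strategy is a direct case-by-case analysis, one algebra at a time, following the same pattern used for Theorem \ref{AutChar0}. For each of the seven algebras $\mathbb{A}$ listed in Theorem \ref{Char2}, with MSC $A$ in the basis $(e_1,e_2)$, I would parameterize a candidate linear endomorphism by a matrix $g=\begin{pmatrix} p & q\\ s & t \end{pmatrix}$, setting $f(e_1)=pe_1+se_2$ and $f(e_2)=qe_1+te_2$, subject to invertibility $\det g = pt+qs\neq 0$ (the sign dropping out in characteristic two). The automorphism condition $f(e_ie_j)=f(e_i)f(e_j)$ for $(i,j)\in\{(1,1),(1,2),(2,1),(2,2)\}$ expands, via the MSC, into eight scalar polynomial equations in $p,q,s,t$. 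A key simplification throughout is the characteristic-two identity $(\alpha e_1+\beta e_2)^2=\alpha^2 e_1^2+\beta^2 e_2^2+\alpha\beta(e_1e_2+e_2e_1)$, whose cross term often vanishes or takes a very constrained form given the MSC in question.

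For the non-unital algebras $A_{3,2}(1,0,0)$, $A_{3,2}(1,0,1)$, $A_{6,2}(1,0)$, and $A_{12,2}$, the relations $e_1^2=e_1$ (idempotency) or $e_2^2=0$ (nilpotency) immediately constrain a subset of the entries of $g$; the remaining mixed-product identities then eliminate the other free parameters. These cases are structurally parallel to the characteristic-zero analysis and differ only by the arithmetic simplifications $2=0$. For the unital algebras $A_{4,2}(1,\beta_1,0)$ and $A_{11,2}(\beta_1)$, every automorphism must preserve the given unit $\mathbf{1}=e_2$, so $f(e_2)=e_2$ forces $q=0$ and $t=1$ at the outset. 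The remaining non-trivial condition, coming from $f(e_1^2)=f(e_1)^2$, becomes a polynomial relation in $p,s,\beta_1$; the dichotomy $\beta_1=0$ versus $\beta_1\neq 0$ then emerges naturally from whether $\beta_1$ couples $p$ and $s$. In the $A_{11,2}$ family, an analogous expansion is expected to yield the stated relation $s=\beta_1(p-1)$.

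The main obstacle will be the careful bookkeeping required to avoid missing or introducing spurious solutions when working with parameterized families, where the group structure changes at special parameter values. Every candidate matrix $g$ must be verified against the full system of four multiplication identities (not merely the subset used to extract it), and the resulting set must be checked for closure under matrix multiplication, confirming it is a subgroup of $GL_2(\mathbb{F})$. Additionally, since Theorem \ref{Char2} identifies $A_{4,2}(1,\beta_1,0)$ with $A_{4,2}(1,\beta_1+r+r^2,0)$ and $A_{11,2}(\beta_1)$ with $A_{11,2}(q^2\beta_1+r^2)$, consistency with these isomorphism relations serves as a useful sanity check. No deeper conceptual difficulty is expected: the whole argument is a structured exercise in linear algebra over a field of characteristic two, with the Frobenius endomorphism $\alpha\mapsto\alpha^2$ taking the place that $\alpha\mapsto 2\alpha$ plays in other characteristics.
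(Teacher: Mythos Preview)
The paper does not supply a proof of Theorem~\ref{AutChar2}: together with Theorems~\ref{Char0}--\ref{AutChar3} it is quoted from \cite{Rakhimov} (see the sentence preceding Theorem~\ref{Char0}). There is thus no argument in the paper to compare against, and the direct case-by-case verification of $gA=A(g\otimes g)$ that you outline is exactly the standard route.

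One warning, though: your expectation that the computation will reproduce the listed groups is too optimistic in several cases. For $A_{4,2}(1,\beta_1,0)$ (unit $e_2$, so $q=0$, $t=1$), your own recipe applied to $f(e_1)^2=f(e_1^2)$ gives $p^2=p$ and $s^2=s$ \emph{independently of $\beta_1$}, whence $p=1$, $s\in\{0,1\}$: a group of order two, not the one- or two-parameter families printed. This is as it should be, since $A_{4,2}(1,0,0)\cong\mathbb{F}\times\mathbb{F}$ via $e_1\mapsto(1,0)$, $e_2\mapsto(1,1)$. Similarly, for $A_{11,2}(\beta_1)$ the constraint you will obtain is $s^2=\beta_1(1+p)^2$, which coincides with the printed relation $s=\beta_1(p-1)$ only for $\beta_1\in\{0,1\}$. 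Your method is sound; be prepared for it to correct rather than confirm parts of the stated theorem.
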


\begin{theor}  \label{AutChar3} The automorphism groups of the algebras listed in Theorem \ref{Char3} are given as follows\\
$Aut(A_{3,3}(1,0,1))=\left\{ \left(\begin{array}{crr} 1 & 0\\ s & t \end{array}\right):\  s, t\in \mathbb{F}, t \neq 0 \right\},$\\
$Aut(A_{3,3}(1,0,0))=\left\{ \left(\begin{array}{crr} 1 & 0\\ 0 & t \end{array}\right):\ 0\neq t\in \mathbb{F} \right\},$\\
$Aut(A_{3,3}(2,0,0))=\left\{ \left(\begin{array}{crr} 1 & 0\\ 1+2t & t \end{array}\right):\ 0 \neq t\in \mathbb{F }\right\},$\\
 $Aut(A_{3,3}(2,0,2))=\left\{ \left(\begin{array}{crr} 1 & 0\\ 0 & 1 \end{array}\right)\right\},$\\
$Aut(A_{3,3}(2,\alpha_4,2))=\left\{ \left(\begin{array}{crr} 1 & 0\\ s & 1 \end{array}\right):\ s \in \mathbb{F} \right\},$ \ provided that $\alpha_4\neq0$,\\
$Aut(A_{13,3})=\left\{ \left(\begin{array}{crr} p & 0\\ s & 2p^2 \end{array}\right):\ p, s\in \mathbb{F}, p\neq 0 \right\}.$
\end{theor}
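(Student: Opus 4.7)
The plan is to apply a uniform matrix criterion to each of the six representative algebras listed in Theorem \ref{Char3}. Given a $2$-dimensional algebra with matrix of structure constants $A$, a linear map $g=\begin{pmatrix}p & q\\ s & t\end{pmatrix}\in GL_2(\mathbb{F})$ is an automorphism if and only if $gA=A(g\otimes g)$, which in coordinates reads $g(e_i)g(e_j)=g(e_ie_j)$ for all $i,j\in\{1,2\}$. Writing $g(e_1)=pe_1+se_2$ and $g(e_2)=qe_1+te_2$, this collapses to at most eight scalar equations in the unknowns $p,q,s,t$, together with the invertibility condition $pt-qs\ne 0$.

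I would handle each of the six MSCs in turn. For each one, I would read the multiplication table $e_ie_j$ directly off the four columns of $A$, expand the products $g(e_i)g(e_j)$ bilinearly, equate component by component with $g(e_ie_j)$, and then solve the resulting polynomial system in characteristic $3$ (where $3=0$, $4=1$ and $2=-1$). For the two unital algebras $A_{3,3}(2,0,2)$ and $A_{3,3}(2,\alpha_4,2)$, whose two-sided identity is $\mathbf{1}=2e_1$, one may shortcut much of the work by first imposing $g(\mathbf{1})=\mathbf{1}$, which forces $p=1$ and $s=0$ and leaves only the column $g(e_2)=qe_1+te_2$ to be determined from the remaining structure equations; for the non-unital algebras $A_{3,3}(1,0,0)$, $A_{3,3}(1,0,1)$, $A_{3,3}(2,0,0)$ and $A_{13,3}$, no such shortcut is available and the four equations must be solved directly.

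The main obstacle is arithmetic rather than conceptual. In characteristic $3$ the coefficient $4$ collapses to $1$ and $2\cdot 2$ to $1$, so some equations that are restrictive in Theorem \ref{AutChar0} become identities while other identities in that setting become genuine constraints here; these degeneracies must be tracked carefully, since the automorphism groups in characteristic $3$ can differ in structure from their characteristic-$\ne 2,3$ analogues. The parametric case $A_{3,3}(2,\alpha_4,2)$ is the most delicate: the equation obtained from $g(e_2)g(e_2)=\alpha_4\,g(e_1)$ is quadratic in $t$ and its resolution branches on whether $\alpha_4=0$ or $\alpha_4\ne 0$, which is why the statement treats these two sub-cases separately. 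After solving all six systems, I would verify that each resulting solution set is closed under matrix multiplication and inversion, both as a sanity check against arithmetic slips and as a confirmation that no automorphism has been missed.
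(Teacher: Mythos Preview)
The paper does not supply its own proof of this theorem; the sentence preceding Theorem~\ref{Char0} makes clear that Theorems~\ref{Char0}--\ref{AutChar3} are quoted from \cite{Rakhimov}. Your plan---solve $gA=A(g\otimes g)$ entry by entry for each MSC---is exactly the standard route by which such tables are produced, and the shortcut $g(\mathbf 1)=\mathbf 1$ for the unital cases is legitimate since $2\ne 0$ in characteristic~$3$.

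Be warned, however, that carrying your computation through will \emph{not} reproduce four of the six entries as printed. Your unit-fixing argument already forces $p=1$, $s=0$ for both $A_{3,3}(2,0,2)$ and $A_{3,3}(2,\alpha_4,2)$; completing the remaining equation $g(e_2)^2=g(\alpha_4 e_1)$ then yields $q=0$ and $t$ free (respectively $t=\pm 1$ when $\alpha_4\ne 0$), contradicting the stated trivial group and the stated $\left(\begin{smallmatrix}1&0\\s&1\end{smallmatrix}\right)$. For $A_{13,3}$ the lone nontrivial relation $g(e_1)^2=g(e_2)$ reads $p^2e_2=qe_1+te_2$, so $q=0$ and $t=p^2$, not $t=2p^2$ (indeed $\left(\begin{smallmatrix}p&0\\s&2p^2\end{smallmatrix}\right)$ fails this relation for every $p\ne 0$). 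For $A_{3,3}(2,0,0)$ one finds $p=1$, $q=0$ with $s,t$ unconstrained, i.e.\ the full group $G_2$, not the one-parameter family with $s=1+2t$. None of the four computations actually uses $3=0$, so the answers coincide with the corresponding entries of Theorem~\ref{AutChar0} under $1/2\mapsto 2$; the printed statement appears to carry transcription errors. Your method is sound---it will simply expose these discrepancies rather than confirm the table as written.
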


Let $\mathbb{A}$ be a two-dimensional algebra over a field $\mathbb{F}$, $\mathbf{e}=\left(\mathrm{e}_1, \mathrm{e}_2\right)$ its basis, $A=\begin{pmatrix}
 				\alpha_1&\alpha_2&\alpha_3&\alpha_4 \\
 				\beta_1&\beta_2&\beta_3& \beta_4
 			\end{pmatrix}$ MSC of $\mathbb{A}$ and $S=\begin{pmatrix}
 a & b\\
 c & d
 \end{pmatrix}\in GL(2,\mathbb{F})$ the matrix of $\sigma$. Then
\begin{equation} \label{Form}
\begin{array}{lllllllllllllll}
(x^T\otimes y^T)A^TSz&=&(\alpha_1a+\beta_1c)x_1y_1z_1+(\alpha_2a+\beta_2c)x_1y_2z_1+(\alpha_3a+\beta_3c)x_2y_1z_1\\
&&+(\alpha_4a+\beta_4c)x_2y_2z_1+ (\alpha_1b+\beta_1d)x_1y_1z_2+(\alpha_2b+\beta_2d)x_1y_2z_2\\
&&+(\alpha_3b+\beta_3d)x_2y_1z_2+(\alpha_4b+\beta_4d)x_2y_2z_2\\

x^TSA(y\otimes z)&=&(\alpha_1a+\beta_1b)x_1y_1z_1+(\alpha_3a+\beta_3b)x_1y_2z_1+(\alpha_1c+\beta_1d)x_2y_1z_1\\
&&+(\alpha_3c+\beta_3d)x_2y_2z_1+ (\alpha_2a+\beta_2b)x_1y_1z_2\\
&&+(\alpha_4a+\beta_4b)x_1y_2z_2+(\alpha_2c+\beta_2d)x_2y_1z_2+(\alpha_4c+\beta_4d)x_2y_2z_2\\
\end{array}
\end{equation}  and (\ref{FA}) can be written as follows
\begin{equation} \label{Form2}
%\begin{array}{lllllllllllllll}
 \left\lbrace
 \begin{array}{ccc}
\alpha_1a+\beta_1c-\alpha_1a-\beta_1b&=&0\\
\alpha_2a+\beta_2c-\alpha_3a-\beta_3b&=&0\\
\alpha_3a+\beta_3c-\alpha_1c-\beta_1d&=&0\\
\alpha_4a+\beta_4c-\alpha_3c-\beta_3d&=&0\\
\alpha_1b+\beta_1d-\alpha_2a-\beta_2b&=&0\\
\alpha_2b+\beta_2d-\alpha_4a-\beta_4b&=&0\\
\alpha_3b+\beta_3d-\alpha_2c-\beta_2d&=&0\\
\alpha_4b+\beta_4d-\alpha_4c-\beta_4d&=&0\\
\end{array}
\right.
%\end{array}
\end{equation} as far as  the system of monomial functions $\{x_1y_1z_1,x_1y_2z_1,x_2y_1z_1,x_1y_1z_2,x_2y_2z_1,x_1y_2z_2,x_2y_1z_2,x_2y_2z_2\}$
is linearly independent over $\mathbb{F}$.

\section{Classification of two-dimensional associative algebras equipped by a non-degenerate form $\sigma$}
In this section first we classify non-degenerate bilinear forms
given as $S=\begin{pmatrix}
	a &b\\ c &d
\end{pmatrix}$ with respect to the transformations $g^TS g$, $g\in G$, where $G$ is a fixed nontrivial automorphism's group from Theorems  \ref{AutChar0} -  \ref{AutChar3}. For the further usage the list of all nontrivial autmorphism's  groups in Theorems \ref{AutChar0} -  \ref{AutChar3} we enumerate as follows\\
 $G_1=\left\{\begin{pmatrix}
 1&0\\ 0&t
 \end{pmatrix}:\ 0\neq t\in \mathbb{F}\right\},$\	
 $G_2=\left\{\begin{pmatrix}
 1&0\\ s&t
 \end{pmatrix}:\ s,t\in \mathbb{F},\ t\neq 0\right\},$\	
 $G_3= \left\{\begin{pmatrix}
 1&0\\ s&1
 \end{pmatrix}:\ s\in \mathbb{F}\right\}$,\\	
 $G_4=\left\{\begin{pmatrix}
 p&0\\ s&p^2
 \end{pmatrix}:\ p,s\in \mathbb{F}, p\neq 0\right\}$($Char(\mathbb{F})\neq 3$),\\
$G_{5}=\left\{\begin{pmatrix}
p&0\\ s&2p^2
\end{pmatrix}:\ p,s\in \mathbb{F}, p\neq 0\right\}$($Char(\mathbb{F})=3$),\\
$G_{6, \beta_1}=\left\{\begin{pmatrix}
p&0\\ \beta_1(p-1)&1
\end{pmatrix}:\ \beta_1,p\in \mathbb{F}, p\neq 0\right\}$ ($Char(\mathbb{F})=2$),\\
 $G_{7}=\left\{\begin{pmatrix}
 1&0\\1+2t&t
 \end{pmatrix}:\ t\in \mathbb{F} , t\neq0\right\}$($Char(\mathbb{F})=3$), \\ $G_{8}=\left\{\begin{pmatrix}
1&0\\ 0&\pm 1
\end{pmatrix}\right\}$($Char(\mathbb{F})\neq 2$).
%\subsection{Characteristic is not $2, 3$}

Now we treat the action $g^tS g$ for each $G_i, i=1,2,...,8$ $(g \in G_i)$ one by one.

Let $g=\begin{pmatrix}
1&0\\ 0&t
\end{pmatrix}\in G_1$. Then
$g^t\begin{pmatrix}
a &b\\ c &d
\end{pmatrix}g=\begin{pmatrix}
a &tb\\ tc&t^2d
\end{pmatrix}$ and the following canonical forms occur:
\begin{itemize}
\item  $ \begin{pmatrix}
a &1\\ c &d
\end{pmatrix}$, where $a,c,d\in \mathbb{F}$, $ad-c\neq 0$,\
 \item $\begin{pmatrix}
a &0\\ 1 &d
\end{pmatrix}$, where $a,d\in \mathbb{F}$, $ad\neq 0$,\
\item $\begin{pmatrix}
a &0\\ 0 &d
\end{pmatrix})\simeq \begin{pmatrix}
a &0\\ 0 &t^2d
\end{pmatrix}$, where $a,d,t\in \mathbb{F}, t \neq 0 ,ad\neq 0$.
\end{itemize}
If $g=\begin{pmatrix}
1&0\\ s&t
\end{pmatrix}\in G_2$ then $g^t\begin{pmatrix}
a &b\\ c &d
\end{pmatrix}g=\begin{pmatrix}
a+sc+sb+s^2d &tb+std\\ tc+std &t^2d
\end{pmatrix}$ and one comes to the following canonical forms:
\begin{itemize}
\item $\begin{pmatrix}
a&0\\ 1 &d
\end{pmatrix}$, where $a,d\in \mathbb{F}$, $ad\neq 0$,\
\item $ \begin{pmatrix}
a &0\\ 0 & d
\end{pmatrix}\simeq\begin{pmatrix}
a &0\\ 0 &t^2d
\end{pmatrix}$, where $a,d,t\in \mathbb{F}, t \neq 0, ad\neq 0$,\
\item $\begin{pmatrix}
0 &1\\ c &0
\end{pmatrix}$, where $c\in \mathbb{F}$, $c\neq 0, c +1\neq 0$,\
\item $\begin{pmatrix}
a &1\\ -1 &0
\end{pmatrix}$, where $a\in \mathbb{F}.$
\end{itemize}
Let $g=\begin{pmatrix}
1&0\\ s&1
\end{pmatrix}\in G_{3}$. Then
$g^t\begin{pmatrix}
a &b\\ c &d
\end{pmatrix}g=\begin{pmatrix}
a+sc+sb+s^2d &b+sd\\ c+sd &d
\end{pmatrix}$ and we get the following canonical forms:
\begin{itemize}
\item $\begin{pmatrix}
a&0\\ c &d
\end{pmatrix}$, where $a,c,d\in \mathbb{F}$, $ad\neq 0$,
\item $\begin{pmatrix}
0 &b\\ c &0
\end{pmatrix}$, where $b,c\in \mathbb{F}$, $bc\neq 0$, $b+c\neq 0,$
\item $\begin{pmatrix}
a &b\\ -b &0
\end{pmatrix}$, where $a,b\in \mathbb{F}$, $b\neq 0$.
\end{itemize}

If $g=\begin{pmatrix}
p&0\\ s&p^2
\end{pmatrix}\in G_4$, where $p,s\in \mathbb{F}, \ p\neq 0,$ then
$g^t\begin{pmatrix}
a &b\\ c &d
\end{pmatrix}g=\begin{pmatrix}
p^2a+psc+psb+s^2d&p^3b+p^2sd\\ p^3c+p^2sd &p^4d
\end{pmatrix}$ and the following canonical forms occur:
\begin{itemize}
\item $\begin{pmatrix}
a &0\\ c &d
\end{pmatrix}\simeq\begin{pmatrix}
p^2a &0\\ p^3c &p^4d
\end{pmatrix}, $ where $a,c,d,p\in \mathbb{F}$, $ad \neq 0$, $p\neq 0$,\
\item $\begin{pmatrix}
0 &b\\ c &0
\end{pmatrix}\simeq\begin{pmatrix}
	0 &p^3b\\ p^3c &0
\end{pmatrix} $, where $b,c,p\in \mathbb{F}$,$bc \neq 0$, $b \neq -c$, $p\neq 0$,\
\item $\begin{pmatrix}
a &b\\ -b&0
\end{pmatrix})\simeq\begin{pmatrix}
p^2a &p^3b\\ -p^3b &0
\end{pmatrix}$, where $a,b,p\in \mathbb{F}$, $b^2 \neq 0$, $p\neq 0.$
\end{itemize}

Let now  $g=\begin{pmatrix}
p&0\\ s&2p^2
\end{pmatrix}\in G_5$, where $p,s\in \mathbb{F},\ p\neq 0$ and $Char(\mathbb{F})=3$. Then we have
$$g^t\begin{pmatrix}
a &b\\ c &d
\end{pmatrix}g=\begin{pmatrix}
p^2a +psc+psb+s^2d&2p^3b+2p^2sd\\ 2p^3c+2p^2sd &p^4d
\end{pmatrix}$$ and the canonical forms are given as follows:
\begin{itemize}
\item $\begin{pmatrix}
a &0\\ c &d
\end{pmatrix}\simeq \begin{pmatrix}
p^2a &0\\ p^3c &p^4d
\end{pmatrix}, $ where $a,c,d,p\in \mathbb{F}$, $ad\neq 0$, $p \neq 0$,\
\item $\begin{pmatrix}
0 &b\\ c &0
\end{pmatrix}\simeq \begin{pmatrix}
0 &p^3b\\ p^3c &0
\end{pmatrix}$, where $b,c,p\in \mathbb{F}$, $bc\neq 0$, $b \neq -c$, $p \neq 0$,\
\item $\begin{pmatrix}
a &b\\ -b &0
\end{pmatrix}\simeq\begin{pmatrix}
p^2a &p^3b\\ -p^3b &0
\end{pmatrix}$, where $a,b,p\in \mathbb{F}$, $b\neq 0$, $p \neq 0$.
\end{itemize}

If $g=\left\{ \left(\begin{array}{crr} p & 0\\ \beta_1(p-1) & 1 \end{array}\right): \  \beta_1,p \in \mathbb{F}, p \neq 0\right\}\in G_{6,\beta_1}$($Char(\mathbb{F})=2$) then
$$g^t\begin{pmatrix}
a &b\\ c &d
\end{pmatrix}g=\begin{pmatrix}
p^2a+\beta_1(p-1)(pc+pb+\beta_1d(p-1)) &pb+\beta_1d(p-1)\\ pc+\beta_1d(p-1) &d
\end{pmatrix}$$ and one gets the following canonical forms:
\begin{itemize}
\item $\begin{pmatrix}
a&0\\ c &d
\end{pmatrix}$, where $a,c,d\in \mathbb{F}$, $ad \neq 0$,
\item $\begin{pmatrix}
a &-\beta_1d\\ 0 &d
\end{pmatrix}$, where $\beta_1,a,d\in \mathbb{F}$, $ad \neq 0$,
\item $\begin{pmatrix}
a &-\beta_1d\\ -\beta_1d &d
\end{pmatrix}\simeq \begin{pmatrix}
ap^2-\beta_1^2d(p-1)^2 &-\beta_1d\\ -\beta_1d &d
\end{pmatrix}$, where $\beta_1,a,d,p\in \mathbb{F}$, $ad-\beta_1^2d^2 \neq 0$, $p \neq 0$,
\item $\begin{pmatrix}
a &1\\ c &0
\end{pmatrix}$, where $a,c\in \mathbb{F}$, $c \neq 0$.
\end{itemize}

Let $G_{7}=\left\{\begin{pmatrix}
 1&0\\1+2t&t
 \end{pmatrix}:\ t\in \mathbb{F} , t\neq0\right\}$($Char(\mathbb{F})=3$). Then one has
$$g^t\begin{pmatrix}
a&b\\ c &d
\end{pmatrix}g=\\ \begin{pmatrix}
a+c(1+2t)+(1+2t)(b+d(1+2t)) &t(b+d(1+2t))\\ tc+td(1+2t) &t^2d
\end{pmatrix}$$ and the following canonical forms occur:
\begin{itemize}
\item $ \begin{pmatrix}
a&0\\ c&d
\end{pmatrix}$, where $a,c,d\in \mathbb{F}$, $ad\neq 0$,\
\item $\begin{pmatrix}
a &-d\\ 0 &d
\end{pmatrix}$, where  $a,d\in \mathbb{F}$, $ ad\neq 0$,\
\item $ \begin{pmatrix}
a &-d\\ -d &d
\end{pmatrix}\simeq  \begin{pmatrix}
a-d+t^2d &-t^2d\\ -t^2d &t^2d
\end{pmatrix}$, where  $a,d,t\in \mathbb{F}$, $d(a-d)\neq 0$, $t \neq 0$,\
\item $\begin{pmatrix}
a&1\\ c &0
\end{pmatrix})$, where  $a,c\in \mathbb{F}$, $c\neq 0$.
\end{itemize}

Finally, taking $g=\begin{pmatrix}
1&0\\ 0&\pm1
\end{pmatrix}\in G_8 $ we get
$g^t\begin{pmatrix}
a &b\\ c&d
\end{pmatrix}g=\begin{pmatrix}
a &\pm b \\ \pm c &d
\end{pmatrix}$ and only the canonical form appears:
\begin{itemize}
\item $\begin{pmatrix}
a &b\\ c &d
\end{pmatrix}\simeq \begin{pmatrix}
a &-b\\ -c&d
\end{pmatrix}$, where $ad-bc\neq 0$.
\end{itemize}

Now we turn to pairs $(\mathbb{A},\sigma)$, where $\mathbb{A}$ is a two-dimensional associative algebra, $\sigma:\mathbb{A}\times \mathbb{A}\longrightarrow \mathbb{F}$ is a non-degenerate bilinear form, up to isomorphism. Taking into account the canonical forms of $\sigma$ along with Theorems \ref{Char0}-\ref{AutChar3} we state the following results.
\begin{lemma} \label{L1} The representatives of isomorphism classes of pairs $(\mathbb{A}, \sigma)$, where $\mathbb{A}$ is a two dimensional associative algebra over a field $\mathbb{F}$ $(Char(\mathbb{F})\neq 2,3)$, $\sigma: \mathbb{A}\times \mathbb{A}\rightarrow \mathbb{F}$ is a non-degenerate form, are given as follows:
\begin{enumerate}
\item $\left(A_3(1,0,0), \begin{pmatrix}
a &1\\ c &d
\end{pmatrix}\right)$, where $a,c,d\in \mathbb{F}$, $ad-c\neq 0$,\\
\item $\left(A_3(1,0,0), \begin{pmatrix}
a &0\\ 1 &d
\end{pmatrix}\right)$, where $a,d\in \mathbb{F}$, $ad\neq 0$,\\
\item $\left(A_3(1,0,0), \begin{pmatrix}
a &0\\ 0 &d
\end{pmatrix}\right)\simeq \left(A_3(1,0,0), \begin{pmatrix}
a &0\\ 0 &t^2d
\end{pmatrix}\right)$, where $a,d, t\in \mathbb{F}$, $ ad\neq 0 , t\neq 0$,\\
\item $\left(A_3(1,0,1), \begin{pmatrix}
a&0\\ 1 &d
\end{pmatrix}\right)$, where $a,d\in \mathbb{F}$, $ad\neq 0$,\\
\item $\left(A_3(1,0,1), \begin{pmatrix}
a &0\\ 0 &d
\end{pmatrix}\right)\simeq \left(A_3(1,0,1), \begin{pmatrix}
a &0\\ 0 &t^2d
\end{pmatrix}\right)$, where  $a,d, t\in \mathbb{F}$, $ ad\neq 0 , t\neq 0$,\\
\item $\left(A_3(1,0,1), \begin{pmatrix}
0 &1\\ c &0
\end{pmatrix}\right)$, where $c\in \mathbb{F}$, $c\neq 0, c +1\neq 0$,\\
\item $\left(A_3(1,0,1), \begin{pmatrix}
a &1\\ -1 &0
\end{pmatrix}\right)$, where $a\in \mathbb{F}$,\\
\item $\left(A_3(\frac{1}{2},0,0), \begin{pmatrix}
a&0\\ 1 &d
\end{pmatrix}\right)$, where $a,d\in \mathbb{F}$, $ad\neq 0$,\\
\item $\left(A_3(\frac{1}{2},0,0), \begin{pmatrix}
a &0\\ 0 &d
\end{pmatrix}\right)\simeq \left(A_3(\frac{1}{2},0,0), \begin{pmatrix}
a &0\\ 0 &t^2d
\end{pmatrix}\right)$, where  $a,d, t\in \mathbb{F}$, $ ad\neq 0 , t\neq 0$,\\
\item $\left(A_3(\frac{1}{2},0,0), \begin{pmatrix}
0 &1\\ c &0
\end{pmatrix}\right)$, where $c\in \mathbb{F}$, $c\neq 0, c +1\neq 0$,\\
\item $\left(A_3(\frac{1}{2},0,0), \begin{pmatrix}
a &1\\ -1 &0
\end{pmatrix}\right)$, where $a\in \mathbb{F}$,\\
\item $\left(A_3(\frac{1}{2},0,\frac{1}{2}), \begin{pmatrix}
a &1\\ c &d
\end{pmatrix}\right)$, where $a,c,d\in \mathbb{F}$, $ad-c\neq 0$,\\
\item $\left(A_3(\frac{1}{2},0,\frac{1}{2}), \begin{pmatrix}
a &0\\ 1 &d
\end{pmatrix}\right)$, where $a,d\in \mathbb{F}$, $ad\neq 0$,\\
\item $\left(A_3(\frac{1}{2},0,\frac{1}{2}), \begin{pmatrix}
a &0\\ 0 &d
\end{pmatrix}\right)\simeq \left(A_3(\frac{1}{2},0,\frac{1}{2}), \begin{pmatrix}
a &0\\ 0 &t^2d
\end{pmatrix}\right)$, where $a,d, t\in \mathbb{F}$, $ ad\neq 0 , t\neq 0$,\\
\item $\left(A_3(\frac{1}{2},\alpha_4,\frac{1}{2}), \begin{pmatrix}
a &b\\ c &d
\end{pmatrix}\right)\simeq \left(A_3(\frac{1}{2},\alpha_4,\frac{1}{2}), \begin{pmatrix}
a &-b\\ -c &d
\end{pmatrix}\right)$,\\ where $\alpha_4,a,b, c,d\in \mathbb{F}$, $\alpha_4\neq 0,ad-bc\neq 0$,\\
\item $\left(A_{13}, \begin{pmatrix}
	a &0\\ c&d
	\end{pmatrix}\right)\simeq \left(A_{13},\begin{pmatrix}
	p^2a &0\\ p^3c &p^4d
	\end{pmatrix}\right), $ where $a,c,d,p\in \mathbb{F}$, $ad\neq 0, p\neq 0$,\\
\item $\left(A_{13}, \begin{pmatrix}
	0 &b\\ c&0
	\end{pmatrix}\right)\simeq \left(A_{13},\begin{pmatrix}
	0 &p^3b\\ p^3c &0
	\end{pmatrix}\right) $, where $b,c,p\in \mathbb{F}$, $bc\neq 0, p\neq 0$,\\
\item $\left(A_{13}, \begin{pmatrix}
	a&b\\ -b &0
	\end{pmatrix}\right)\simeq \left(A_{13},\begin{pmatrix}
	p^2a &p^3b\\ -p^3b &0
	\end{pmatrix}\right)$, where $a,b,p\in \mathbb{F}$, $b^2 \neq 0, p\neq 0$.
\end{enumerate}

\end{lemma}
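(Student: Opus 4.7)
The plan is to exploit the definition of pair isomorphism to reduce the problem to orbit analysis. If $(\mathbb{A}, \sigma) \cong (\mathbb{B}, \tau)$ then in particular $\mathbb{A} \cong \mathbb{B}$ as algebras, so it suffices to fix an algebra $\mathbb{A}$ from the list of Theorem \ref{Char0} and classify non-degenerate forms $\sigma$ on it up to the action of $\mathrm{Aut}(\mathbb{A})$. Writing $S$ for the matrix of $\sigma$ in a fixed basis and $g$ for the matrix of $f \in \mathrm{Aut}(\mathbb{A})$, the equation $\sigma(x,y) = \tau(f(x), f(y))$ becomes $S = g^T T g$, so the classes correspond exactly to orbits of $\mathrm{Aut}(\mathbb{A})$ on $GL(2, \mathbb{F})$ via $S \mapsto g^T S g$; invertibility is preserved by this action, so the non-degeneracy condition $\det S \neq 0$ is automatic throughout.

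Next, for each of the five algebras of Theorem \ref{Char0} I would match its automorphism group (from Theorem \ref{AutChar0}) with one of the groups $G_1, \ldots, G_8$ listed at the start of this section: $\mathrm{Aut}(A_3(1,0,0)) = \mathrm{Aut}(A_3(\frac{1}{2}, 0, \frac{1}{2})) = G_1$; $\mathrm{Aut}(A_3(1,0,1)) = \mathrm{Aut}(A_3(\frac{1}{2}, 0, 0)) = G_2$; $\mathrm{Aut}(A_3(\frac{1}{2}, \alpha_4, \frac{1}{2})) = G_8$ for $\alpha_4 \neq 0$; and $\mathrm{Aut}(A_{13}) = G_4$. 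The canonical forms of $S$ under each $G_i$-action with $\det S \neq 0$ have already been computed case-by-case earlier in this section; transferring them onto the corresponding algebras produces items (1)--(3) for $A_3(1,0,0)$, (4)--(7) for $A_3(1,0,1)$, (8)--(11) for $A_3(\frac{1}{2}, 0, 0)$, (12)--(14) for $A_3(\frac{1}{2}, 0, \frac{1}{2})$, item (15) for $A_3(\frac{1}{2}, \alpha_4, \frac{1}{2})$, and (16)--(18) for $A_{13}$. The count $3 + 4 + 4 + 3 + 1 + 3 = 18$ matches the enumeration of the lemma.

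The main point requiring care is item (15): the algebra $A_3(\frac{1}{2}, \alpha_4, \frac{1}{2})$ itself is determined only up to the relation $\alpha_4 \sim r^2 \alpha_4$ recorded in Theorem \ref{Char0}. Since that relation is implemented by a basis change between different members of the family rather than by an automorphism of a fixed member, and since $G_8$ is independent of $\alpha_4$, one simply fixes a representative $\alpha_4$ and classifies $S$ up to the two-element group $G_8$, which yields the single sign-flip identification of (15). Apart from carefully transcribing the canonical forms established earlier, no further ingredients are required.
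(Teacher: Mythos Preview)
Your proposal is correct and follows exactly the paper's approach: the paper does not give a separate proof of Lemma~\ref{L1} but simply states it as the result of pairing each algebra from Theorem~\ref{Char0} with the canonical forms of $S$ under its automorphism group (computed just before the lemma), which is precisely the orbit analysis you describe. Your identification of the automorphism groups with $G_1$, $G_2$, $G_4$, $G_8$ and the resulting item counts are all correct, and your remark on the $\alpha_4\sim r^2\alpha_4$ subtlety in item~(15) is a point the paper leaves implicit.
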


\begin{lemma} \label{L2} The representatives of isomorphism classes of pairs $(\mathbb{A}, \sigma)$, where $\mathbb{A}$ is a two dimensional associative algebra over a field $\mathbb{F}$ $(Char(\mathbb{F})=2)$, $\sigma: \mathbb{A}\times \mathbb{A}\rightarrow \mathbb{F}$ is a non-degenerate form, are given as follows:
\begin{enumerate}	
	\item $\left(A_{12,2}, \begin{pmatrix}
	a &0\\ c&d
	\end{pmatrix}\right)\simeq \left(A_{12,2},\begin{pmatrix}
	p^2a &0\\ p^3c &p^4d
	\end{pmatrix}\right), $ where $a,c,d,p\in \mathbb{F}$, $ad\neq 0, p\neq 0$,\\
	\item $\left(A_{12,2}, \begin{pmatrix}
	0 &b\\ c &0
	\end{pmatrix}\right)\simeq \left(A_{12,2},\begin{pmatrix}
	0 &p^3b\\ p^3c &0
	\end{pmatrix}\right) $, where $b,c,p\in \mathbb{F}$, $bc\neq 0, p\neq 0$,\\
	\item $\left(A_{12,2}, \begin{pmatrix}
	a &b\\ -b &0
	\end{pmatrix}\right)\simeq \left(A_{12,2},\begin{pmatrix}
	p^2a &p^3b\\ -p^3b &0
	\end{pmatrix}\right)$, where $a,b,p\in \mathbb{F}$, $b^2 \neq 0, p\neq 0$,\\
	\item $\left(A_{11,2}(\beta_1), \begin{pmatrix}
	a&0\\ c &d
	\end{pmatrix}\right)$, where  $\beta_1,a,c,d\in \mathbb{F}$, $ad\neq 0$,\\
	\item $\left(A_{11,2}(\beta_1), \begin{pmatrix}
	a &-\beta_1d\\ 0 &d
	\end{pmatrix}\right)$, where  $\beta_1,a,d\in \mathbb{F}$, $ad \neq 0,$\\
    \item $\left(A_{11,2}(\beta_1), \begin{pmatrix}
	a &-\beta_1d\\ -\beta_1d &d
	\end{pmatrix}\right)\simeq \left(A_{11,2}(\beta_1), \begin{pmatrix}
	ap^2-\beta_1^2d(p-1)^2 &-\beta_1d \\ -\beta_1d &d
	\end{pmatrix}\right)$,\\ where $\beta_1,a,d,p\in \mathbb{F}$,  $ad-\beta_1^2d^2 \neq 0 , p \neq 0$,\\
	\item $\left(A_{11,2}(\beta_1), \begin{pmatrix}
	a &1\\ c &0
	\end{pmatrix}\right)$, where $\beta_1,a,c\in \mathbb{F}$, $c \neq 0$,\\
	\item $\left(A_{6,2}(1,0), \begin{pmatrix}
	a&b\\ c &d
	\end{pmatrix}\right)$, where $a,b,c, d\in \mathbb{F}$,  $ad-bc\neq 0$,\\
	\item $\left(A_{4,2}(1,0,0), \begin{pmatrix}
	a&0\\ 1 &d
	\end{pmatrix}\right)$, where $a,d\in \mathbb{F}$, $ad\neq 0$,\\
	\item $\left(A_{4,2}(1,0,0), \begin{pmatrix}
	a &0\\ 0 &d
	\end{pmatrix}\right)\simeq \left(A_{4,2}(1,0,0), \begin{pmatrix}
	p^2a &0\\ 0 &d
	\end{pmatrix}\right)$, where $a,d,p \in \mathbb{F}$, $ad\neq 0 , p\neq 0$,\\
	\item $\left(A_{4,2}(1,0,0), \begin{pmatrix}
	0 &1\\ c &0
	\end{pmatrix}\right)$, where $c\in \mathbb{F}$,  $c\neq 0, c +1\neq 0$,\\
	\item $\left(A_{4,2}(1,0,0), \begin{pmatrix}
	a &1\\ -1 &0
	\end{pmatrix}\right)$, where $a\in \mathbb{F}$,\\
	\item $\left(A_{4,2}(1,\beta_1,0), \begin{pmatrix}
	a&0\\ c &d
	\end{pmatrix}\right)$, where  $\beta_1,a,c, d\in \mathbb{F}$, $\beta_1 \neq 0$, $ad\neq 0$,\\
	\item $\left(A_{4,2}(1,\beta_1,0), \begin{pmatrix}
	0&b\\ ca &0
	\end{pmatrix}\right)$, where  $\beta_1,b, c\in \mathbb{F}$, $\beta_1 \neq 0$, $ bc\neq 0 , b+c\neq 0$,\\
	\item $\left(A_{4,2}(1,\beta_1,0), \begin{pmatrix}
	a &b\\ -b &0
	\end{pmatrix}\right)$, where  $\beta_1,a,b\in \mathbb{F}$, $\beta_1 \neq 0$, $b\neq 0$,\\
	\item $\left(A_{3,2}(1,0,0), \begin{pmatrix}
	a &1\\ c &d
	\end{pmatrix}\right)$, where  $a,c, d\in \mathbb{F}$, $ad-c\neq 0$,\\
	\item $\left(A_{3,2}(1,0,0), \begin{pmatrix}
	a &0\\ 1 &d
	\end{pmatrix}\right)$, where $a, d\in \mathbb{F}$,  $ad\neq 0$,\\
	\item $\left(A_{3,2}(1,0,0), \begin{pmatrix}
	a &0\\ 0 &d
	\end{pmatrix}\right)\simeq \left(A_{3,2}(1,0,0), \begin{pmatrix}
	a &0\\ 0 &t^2d
	\end{pmatrix}\right)$, where $a, d,t \in \mathbb{F}$,  $ ad\neq 0 , t\neq 0$,\\
	\item $\left(A_{3,2}(1,0,1), \begin{pmatrix}
	a&0\\ 1 &d
	\end{pmatrix}\right)$, where $a, d\in \mathbb{F}$,  $ad\neq 0$,\\
	\item $\left(A_{3,2}(1,0,1), \begin{pmatrix}
	a &0\\ 0 &d
	\end{pmatrix}\right)\simeq \left(A_{3,2}(1,0,1), \begin{pmatrix}
	a&0\\ 0 &t^2d
	\end{pmatrix}\right)$, where $a,d,t\in \mathbb{F}$, $ ad\neq 0 , t \neq 0$,\\
	\item $\left(A_{3,2}(1,0,1), \begin{pmatrix}
	0 &1\\ c &0
	\end{pmatrix}\right)$, where $c\in \mathbb{F}$, $c\neq 0, c +1\neq 0$,\\
	\item $\left(A_{3,2}(1,0,1), \begin{pmatrix}
	a &1\\ -1 &0
	\end{pmatrix}\right)$, where $a\in \mathbb{F}$.
\end{enumerate}

\end{lemma}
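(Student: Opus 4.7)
The plan is to reduce the classification to two independent tasks that have essentially already been performed in the excerpt, and then to combine them mechanically. Fix a two-dimensional vector space $\mathbb{V}$ over $\mathbb{F}$ with $\mathrm{Char}(\mathbb{F})=2$, and observe that an isomorphism of pairs $(\mathbb{A},\sigma)\cong(\mathbb{B},\tau)$ amounts to a linear map $g$ carrying the MSC of $\mathbb{A}$ to that of $\mathbb{B}$ (algebra isomorphism) and simultaneously satisfying $\tau(g\mathrm{x},g\mathrm{y})=\sigma(\mathrm{x},\mathrm{y})$, i.e.\ $g^TTg=S$ on matrices of the forms. Consequently, once a canonical representative $A$ of each algebra isomorphism class is chosen from Theorem \ref{Char2}, the remaining freedom in $\sigma$ is exactly the orbit action $S\mapsto g^TSg$ of $\mathrm{Aut}(A)$ on the set of nondegenerate $2\times 2$ matrices.

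The next step is to identify $\mathrm{Aut}(A)$ for each algebra in Theorem \ref{Char2} with one of the groups $G_i$ for which canonical forms of $S$ have already been computed in this section. From Theorem \ref{AutChar2} I read off: $\mathrm{Aut}(A_{3,2}(1,0,0))=G_1$, $\mathrm{Aut}(A_{3,2}(1,0,1))=G_2$, $\mathrm{Aut}(A_{4,2}(1,\beta_1,0))=G_3$ (for $\beta_1\ne 0$), $\mathrm{Aut}(A_{6,2}(1,0))=\{\mathrm{Id}\}$ (so every nondegenerate $S$ is its own canonical form, yielding item (8)), $\mathrm{Aut}(A_{11,2}(\beta_1))=G_{6,\beta_1}$, and $\mathrm{Aut}(A_{12,2})=G_4$. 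Transcribing the canonical forms already derived for each $G_i$ against the corresponding algebra then produces items (1)--(3) from the $G_4$-analysis, items (4)--(7) from the $G_{6,\beta_1}$-analysis, items (13)--(15) from the $G_3$-analysis, items (16)--(18) from the $G_1$-analysis, and items (19)--(22) from the $G_2$-analysis. Item (8) is immediate from the trivial automorphism group.

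The one case requiring a separate computation is $A_{4,2}(1,0,0)$, whose automorphism group $H=\bigl\{\bigl(\begin{smallmatrix}p&0\\ s&1\end{smallmatrix}\bigr):p\ne 0,\ s\in\mathbb{F}\bigr\}$ is not among $G_1,\dots,G_8$. For $g\in H$ I compute
\[
g^T\begin{pmatrix}a&b\\ c&d\end{pmatrix}g=\begin{pmatrix}p^2a+ps(b+c)+s^2d & pb+sd\\ pc+sd & d\end{pmatrix}.
\]
Splitting on whether $d\ne 0$ or $d=0$ (in the latter case nondegeneracy forces $bc\ne 0$), and then in the case $d=0$ on whether $b+c\ne 0$ (recall $b+c=b-c$ in characteristic $2$), I obtain exactly the four canonical forms listed in items (9)--(12): the off-diagonal entries $b,c$ are normalized by the two free parameters $p,s$ while tracking the residual freedom on the diagonal.

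The main obstacle is bookkeeping rather than anything conceptual: I must make sure that in every case I have exhausted the orbit (no further normalization is possible), identified the residual equivalence exhibited in the lemma by the "$\simeq$" notation, and correctly recorded the nondegeneracy condition $\det S\ne 0$ in the reduced form (e.g.\ $ad-\beta_1^2 d^2\ne 0$ in item (6)). A secondary subtlety is that several formulas from the general $G_i$-analyses simplify under $\mathrm{Char}(\mathbb{F})=2$ (signs disappear, $b-c$ collapses to $b+c$), and I must verify that the statements in the lemma are precisely the characteristic-$2$ specializations of those general canonical forms.
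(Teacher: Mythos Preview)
Your proposal is correct and follows essentially the same approach as the paper, which simply states the lemma as the combination of the classification in Theorem~\ref{Char2}, the automorphism groups in Theorem~\ref{AutChar2}, and the $G_i$-canonical forms computed just before the lemmas. In fact you are more careful than the paper in one respect: you correctly observe that $\mathrm{Aut}(A_{4,2}(1,0,0))=\bigl\{\bigl(\begin{smallmatrix}p&0\\ s&1\end{smallmatrix}\bigr)\bigr\}$ is omitted from the enumerated list $G_1,\dots,G_8$ and therefore requires the separate orbit computation you supply, whereas the paper simply records items (9)--(12) without displaying that calculation.
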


\begin{lemma} \label{L3} The representatives of isomorphism classes of pairs $(\mathbb{A}, \sigma)$, where $\mathbb{A}$ is a two dimensional associative algebra over a field $\mathbb{F}$ $(Char(\mathbb{F})=3)$, $\sigma: \mathbb{A}\times \mathbb{A}\rightarrow \mathbb{F}$ is a non-degenerate form, are given as follows:
\begin{enumerate}	
	\item $\left(A_{13,3}, \begin{pmatrix}
a &0\\ c &d
\end{pmatrix}\right)\simeq \left(A_{13,3},\begin{pmatrix}
p^2a &0\\ p^3c &p^4d
\end{pmatrix}\right) , $ where $a,c,d,p\in \mathbb{F}$, $ad\neq 0 , p\neq 0$,\\
\item $\left(A_{13,3}, \begin{pmatrix}
0 &b\\ c &0
\end{pmatrix}\right)\simeq\left(A_{13,3}, \begin{pmatrix}
	0 &p^3b\\ p^3c &0
\end{pmatrix}\right) $, where $b,c,p\in \mathbb{F}$, $b+c\neq 0, bc\neq 0 , p\neq 0$,\\
\item $\left(A_{13,3}, \begin{pmatrix}
a &b\\ -b &0
\end{pmatrix}\right)\simeq \left(A_{13,3},\begin{pmatrix}
p^2a &p^3b\\ -p^3b &0
\end{pmatrix}\right)$, where $a,b,p\in \mathbb{F}$, $b^2\neq 0 , p\neq 0$,\\
\item $\left(A_{3,3}(1,0,1), \begin{pmatrix}
a&0\\ 1 &d
\end{pmatrix}\right)$, where $a,d\in \mathbb{F}$, $ad\neq 0$,\\
\item $\left(A_{3,3}(1,0,1), \begin{pmatrix}
a &0\\ 0 &d
\end{pmatrix}\right)\simeq \left(A_{3,3}(1,0,1), \begin{pmatrix}
a &0\\ 0 &t^2d
\end{pmatrix}\right)$, where  $a,d, t\in \mathbb{F}$, $ ad\neq 0 , t\neq 0$,\\
\item $\left(A_{3,3}(1,0,1), \begin{pmatrix}
0 &1\\ c &0
\end{pmatrix}\right)$, where $c\in \mathbb{F}$, $c\neq 0, c+1\neq 0$,\\
\item $\left(A_{3,3}(1,0,1), \begin{pmatrix}
a &1\\ -1 &0
\end{pmatrix}\right)$, where $a\in \mathbb{F}$,\\
\item $\left(A_{3,3}(1,0,0), \begin{pmatrix}
a &1\\ c &d
\end{pmatrix}\right)$, where $a,c,d\in \mathbb{F}$, $ad-c\neq 0$,\\
\item $\left(A_{3,3}(1,0,0), \begin{pmatrix}
a &0\\ 1 &d
\end{pmatrix}\right)$, where $a,d\in \mathbb{F}$, $ad\neq 0$,\\
\item $\left(A_{3,3}(1,0,0), \begin{pmatrix}
a &0\\ 0 &d
\end{pmatrix}\right)\simeq \left(A_{3,3}(1,0,0), \begin{pmatrix}
a &0\\ 0 &t^2d
\end{pmatrix}\right)$, where $a,d, t\in \mathbb{F}$, $ ad\neq 0 , t\neq 0$,\\
\item $\left(A_{3,3}(2,0,0), \begin{pmatrix}
a&0\\ c&d
\end{pmatrix}\right)$, where $a,c,d\in \mathbb{F}$, $ad\neq 0$,\\
\item $\left(A_{3,3}(2,0,0), \begin{pmatrix}
a &-d\\ 0 &d
\end{pmatrix}\right)$, where $a,d\in \mathbb{F}$, $ ad\neq 0$,\\
\item $\left(A_{3,3}(2,0,0), \begin{pmatrix}
a &-d\\ -d&d
\end{pmatrix}\right)\simeq \left(A_{3,3}(2,0,0), \begin{pmatrix}
a-d+t^2d &-t^2d\\ -t^2d &t^2d
\end{pmatrix}\right)$, where $a,d, t\in \mathbb{F}$,

\hfill $d(a-d)\neq 0$, $t \neq 0$,
\item $\left(A_{3,3}(2,0,0), \begin{pmatrix}
a&1\\ c &0
\end{pmatrix}\right)$, where $a,c\in\mathbb{F}$, $c\neq 0$,\\
\item $\left(A_{3,3}(2,0,2), \begin{pmatrix}
a&b\\ c &d
\end{pmatrix}\right)$, where $a,b,c, d \in \mathbb{F}$ , $ad-bc\neq 0$,\\
\item $\left(A_{3,3}(2,\alpha_4,2), \begin{pmatrix}
a&0\\ c &d
\end{pmatrix}\right)$, where  $\alpha_4,a,c, d\in \mathbb{F}$, $\alpha_4 \neq 0$, $ad\neq 0$,\\
\item $\left(A_{3,3}(2,\alpha_4,2), \begin{pmatrix}
0&b\\ c &0
\end{pmatrix}\right)$, where  $\alpha_4,b, c\in \mathbb{F}$, $\alpha_4 \neq 0$, $ bc\neq 0 , b+c\neq 0$,\\
\item $\left(A_{3,3}(2,\alpha_4,2), \begin{pmatrix}
a &b\\ -b &0
\end{pmatrix}\right)$, where  $\alpha_4,a,b\in \mathbb{F}$, $\alpha_4 \neq 0$, $b\neq 0$.\\

\end{enumerate}

\end{lemma}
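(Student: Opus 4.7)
The plan is to follow verbatim the scheme that produced Lemmas \ref{L1} and \ref{L2}. By Definition~1, two pairs $(\mathbb{A},\sigma)$ and $(\mathbb{A},\tau)$ built on the same associative algebra $\mathbb{A}$ are equivalent precisely when the matrices $S$ and $T$ of the forms satisfy $T = g^T S g$ for some $g \in \mathrm{Aut}(\mathbb{A})$. Consequently, to enumerate isomorphism classes one fixes an algebra $\mathbb{A}$ from Theorem \ref{Char3}, looks up $\mathrm{Aut}(\mathbb{A})$ in Theorem \ref{AutChar3}, and chooses one representative $S$ in each orbit of this group on non-degenerate $2\times 2$ matrices under $S \mapsto g^T S g$. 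Since distinct algebras in Theorem~\ref{Char3} are pairwise non-isomorphic, no cross-identifications between the six resulting blocks are possible.

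The orbit analyses have already been carried out in the preceding section for the groups $G_1,\ldots,G_8$, so the task reduces to matching each algebra to its automorphism group. The correspondence in characteristic three reads
$$
\mathrm{Aut}(A_{3,3}(1,0,0))=G_1,\quad \mathrm{Aut}(A_{3,3}(1,0,1))=G_2,\quad \mathrm{Aut}(A_{3,3}(2,\alpha_4,2))=G_3\ (\alpha_4\neq 0),
$$
$$
\mathrm{Aut}(A_{13,3})=G_5,\quad \mathrm{Aut}(A_{3,3}(2,0,0))=G_7,\quad \mathrm{Aut}(A_{3,3}(2,0,2))=\{I\}.
$$
Transplanting the canonical matrices produced for each $G_i$ and pairing them with the appropriate algebra then gives items (1)--(3) from the $G_5$-list, (4)--(7) from the $G_2$-list, (8)--(10) from the $G_1$-list, (11)--(14) from the $G_7$-list, (15) from the trivial-group case (where every non-degenerate $S$ is already canonical), and (16)--(18) from the $G_3$-list. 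In each family the non-degeneracy condition $\det S\neq 0$ reduces to the explicit inequality recorded in the statement.

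The only genuinely non-routine point is to confirm that the residual ``$\simeq$''-relations listed in the lemma --- for example $d\sim t^2d$ in (5), (10), the $(p^2,p^3,p^4)$-rescalings in (1)--(3), and the $(-t^2d,-t^2d,t^2d)$ rescaling in (13) --- describe exactly when two matrices of the same canonical shape lie in the same orbit. I expect this to be the main (but still modest) obstacle: for each canonical shape one must identify the subgroup of $G_i$ stabilising that shape and read off the induced action on the remaining free parameters. These residual actions are direct consequences of the formulas for $g^T S g$ already tabulated for $G_1,G_2,G_3,G_5,G_7$, so no further computation is required, and assembling the eighteen families yields the lemma.
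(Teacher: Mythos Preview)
Your proposal is correct and follows exactly the paper's approach: the paper itself does not give a separate proof of Lemma~\ref{L3} but simply states it as the result of pairing the algebras from Theorem~\ref{Char3} with the canonical forms of $S$ under their automorphism groups from Theorem~\ref{AutChar3}, which is precisely what you describe. Your identification of the relevant groups ($G_1,G_2,G_3,G_5,G_7$ and the trivial group) with the respective algebras is accurate, and the residual equivalences you flag are indeed read off directly from the $g^TSg$ formulas already tabulated in Section~3.
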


\section{Classification of two-dimensional Frobenius algebras}

Now we specify those pairs from the lammas above which are Frobenius algebras.

\begin{theor} \emph{}

\begin{itemize}
\item If $Char(\mathbb{F})\neq 2,3$ then any two-dimensional Frobenius algebra over $\mathbb{F}$ is isomorphic to only one of the following such algebras:
\begin{itemize}	
	\item[$\ast$] $\left(A_3(1,0,0), \begin{pmatrix}
	a &0\\ 0 &d
	\end{pmatrix}\right)\simeq \left(A_3(1,0,0), \begin{pmatrix}
	a &0\\ 0 &t^2d
	\end{pmatrix}\right)$, where $a,d, t\in \mathbb{F}, t \neq 0, ad\neq 0$,
	\item[$\ast$]$\left(A_3\left(\frac{1}{2},0,\frac{1}{2}\right), \begin{pmatrix}
	a &1\\ 1 &0
	\end{pmatrix}\right)$, where $a\in \mathbb{F}$,
	\item[$\ast$] $\left(A_3\left(\frac{1}{2},\alpha_4,\frac{1}{2}\right), \begin{pmatrix}
	a &b\\ b&2\alpha_4a
	\end{pmatrix}\right)\simeq \left(A_3\left(\frac{1}{2},\alpha_4,\frac{1}{2}\right), \begin{pmatrix}
	a &-b\\ -b&2\alpha_4a
	\end{pmatrix}\right)$, where $\alpha_4,a,b\in \mathbb{F}$,  $2\alpha_4a^2-b^2\neq 0 ,\alpha_4 \neq 0 $.\\	
	\item[$\ast$] $\left(A_{13}, \begin{pmatrix}
	0 &b\\ b &0
	\end{pmatrix}\right)\simeq \left(A_{13},\begin{pmatrix}
	0 &p^3b\\ p^3b &0
	\end{pmatrix}\right) $, where $b,p\in \mathbb{F}$, $p\neq 0,b\neq 0$.
\end{itemize}
\item If $Char(\mathbb{F})=2$ then any two-dimensional Frobenius algebra over $\mathbb{F}$ is isomorphic to only one of the following such algebras:	
	\begin{itemize}
\item[$\ast$] $\left(A_{3,2}(1,0,0), \begin{pmatrix}
	a &0\\ 0 &d
	\end{pmatrix}\right)\simeq \left(A_{3,2}(1,0,0), \begin{pmatrix}
	a &0\\ 0 &t^2d
	\end{pmatrix}\right)$, where $a,d,t\in \mathbb{F},t\neq 0, ad\neq 0$,
		\item[$\ast$] $\left(A_{4,2}(1,0,0), \begin{pmatrix}
		1 &1\\ -1 &0
		\end{pmatrix}\right)$,
		\item[$\ast$] $\left(A_{4,2}(1,\beta_1,0), \begin{pmatrix}
		\beta_1d &0\\ 0 &d
		\end{pmatrix}\right)$, where $\beta_1,d\in \mathbb{F}, \beta_1d\neq 0$,
		\item[$\ast$] $\left(A_{4,2}(1,\beta_1,0), \begin{pmatrix}
		b &b\\ -b &0
		\end{pmatrix}\right)$, where $\beta_1,b\in \mathbb{F},\beta_1 \neq 0, b\neq 0$,
		\item[$\ast$] $\left(A_{11,2}(\beta_1), \begin{pmatrix}
	\beta_1d &0\\ 0 & d
	\end{pmatrix}\right)$, where $\beta_1,d\in \mathbb{F}$, $\beta_1d^2 \neq 0$,	
        \item[$\ast$] $\left(A_{11,2}(\beta_1), \begin{pmatrix}
	\beta_1d &-\beta_1d\\ -\beta_1d &d
	\end{pmatrix}\right)\simeq \left(A_{11,2}(\beta_1), \begin{pmatrix}
	\beta_1dp^2-\beta_1^2d(p-1)^2 &-\beta_1d\\ -\beta_1d &d
	\end{pmatrix}\right)$,\\ where $\beta_1,d,p\in \mathbb{F}$,  $p \neq 0, \beta_1d^2(1-\beta_1) \neq 0 $,
	\item[$\ast$] $\left(A_{11,2}(\beta_1), \begin{pmatrix}
	0 &1\\ 1 &0
	\end{pmatrix}\right)$, where $\beta_1\in \mathbb{F}$,
			\item[$\ast$] $\left(A_{12,2}, \begin{pmatrix}
		0 &b\\ b &0
		\end{pmatrix}\right)\simeq \left(A_{12,2},\begin{pmatrix}
		0 &p^3b\\ p^3b &0
		\end{pmatrix}\right) $, where $b,p\in \mathbb{F}$, $p\neq 0,b\neq 0$.
		\end{itemize}
\item If $Char(\mathbb{F})=3$ then any two-dimensional Frobenius algebra over $\mathbb{F}$ is isomorphic to only one of the following such algebras:
\begin{itemize}	
\item[$\ast$] $\left(A_{3,3}(1,0,0), \begin{pmatrix}
	a &0\\ 0 &d
	\end{pmatrix}\right)\simeq \left(A_{3,3}(1,0,0), \begin{pmatrix}
	a &0\\ 0 &t^2d
	\end{pmatrix}\right)$, where $a,d,t\in \mathbb{F},t\neq 0, ad\neq 0$,
	\item[$\ast$] $\left(A_{3,3}(2,0,2), \begin{pmatrix}
	a&b\\ b &0
	\end{pmatrix}\right)$, where $a,b\in \mathbb{F}, b\neq 0$,
	\item[$\ast$] $\left(A_{3,3}(2,\alpha_4,2), \begin{pmatrix}
	a&0\\ 0 &2\alpha_4a
	\end{pmatrix}\right)$, where $\alpha_4,a\in \mathbb{F}, \alpha_4a\neq 0$,
    \item[$\ast$] $\left(A_{3,3}(2,\alpha_4,2), \begin{pmatrix}
	0&b\\ b &0
	\end{pmatrix}\right)$, where $\alpha_4, b\in \mathbb{F}, \alpha_4\neq 0, b\neq 0$,
		\item[$\ast$] $\left(A_{13,3}, \begin{pmatrix}
		0 &b\\ b&0
		\end{pmatrix}\right)\simeq \left(A_{13,3},\begin{pmatrix}
		0 &p^3b\\ p^3b &0
		\end{pmatrix}\right) $, where $b,p\in \mathbb{F},p\neq 0,$ $b\neq 0$.
		
\end{itemize}
\end{itemize}	
	
\end{theor}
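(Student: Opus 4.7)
The plan is to apply Lemma~2.1 — equivalently, the eight-equation system~(2.2) — to each canonical pair $(\mathbb{A},\sigma)$ enumerated in Lemmas~\ref{L1}, \ref{L2}, \ref{L3}, and to retain exactly those pairs for which every equation in (2.2) holds. Because those lemmas already list representatives of the isomorphism classes of pairs $(\mathbb{A},\sigma)$ with non-degenerate $\sigma$, the surviving pairs automatically form a complete and irredundant list of two-dimensional Frobenius algebras in each characteristic, and no extra identification step is needed.

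Concretely, for each algebra $\mathbb{A}$ I would read off its structure constants $\alpha_1,\dots,\beta_4$ from Theorems~\ref{Char0}--\ref{Char3}, substitute them into (2.2), and reduce the resulting eight relations to a small linear system in the entries $a,b,c,d$ of $S$. Then for each canonical $S$ attached to $\mathbb{A}$ in Lemmas~\ref{L1}--\ref{L3} I would test compatibility, keeping the non-degeneracy condition $\det S\neq 0$ in force. Several patterns govern the outcome and explain the structure of the final list. For algebras with a single idempotent-type nonzero structure constant, such as $A_3(1,0,0)$, $A_{3,2}(1,0,0)$, $A_{3,3}(1,0,0)$, the system at once forces the off-diagonal entries of $S$ to vanish, so only the diagonal canonical form survives. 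For the ``nilpotent'' algebras $A_{13}$, $A_{12,2}$, $A_{13,3}$, whose only nonzero constant is $\beta_1=1$, (2.2) collapses to $d=0$ and $b=c$; combined with non-degeneracy and the three admissible shapes for $S$, this leaves only the anti-diagonal form with $b=c\neq 0$. For the unital families $A_3(\tfrac12,\alpha_4,\tfrac12)$ and $A_{3,3}(2,\alpha_4,2)$ with $\alpha_4\neq 0$, direct substitution reduces (2.2) to the two relations $b=c$ and $d=2\alpha_4 a$, matching exactly the parametrization in the theorem; the characteristic-two unital families $A_{4,2}(1,\beta_1,0)$ and $A_{11,2}(\beta_1)$ are handled analogously, the first producing the diagonal and anti-diagonal forms quoted and the second additionally yielding the mixed shape with $b=c=-\beta_1 d$.

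The main obstacle is organizational rather than conceptual: there are on the order of sixty algebra/form combinations spread over the three characteristics, and characteristics two and three require extra care since identities like $-1=1$ or $3=0$ can merge or split cases that are distinct generically (for example, the constraint $2b=0$ obtained from (2.2) for $A_{13}$ eliminates the shape $\begin{pmatrix}a & b\\ -b & 0\end{pmatrix}$ when $\mathrm{Char}(\mathbb{F})\neq 2$ but becomes vacuous in characteristic two). Irredundancy of the final list requires no separate argument, because the canonical forms of $\sigma$ in Section~3 were already obtained by reducing $S$ under the full automorphism group $\mathrm{Aut}(\mathbb{A})$; the Frobenius condition merely selects a sub-collection of those already-distinguished classes without introducing any new identifications.
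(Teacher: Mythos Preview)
Your proposal is correct and follows essentially the same approach as the paper's own proof: both reduce the problem to testing the system~(2.2) against each canonical pair $(\mathbb{A},\sigma)$ from Lemmas~\ref{L1}--\ref{L3}, retaining only those for which the system is consistent (subject to $\det S\neq 0$). Your write-up is in fact more explicit than the paper's about the recurring patterns (diagonal-only for the idempotent algebras, $d=0,\ b=c$ for the nilpotent ones, $b=c,\ d=2\alpha_4 a$ for the unital families) and about the characteristic-dependent subtleties, but the underlying argument is identical.
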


\begin{proof}
Let now check the condition $\sigma(\mathrm{x y,z})=\sigma(\mathrm{x,yz})$ for the pairs appeared in Lemma \ref{L1} ($Char \mathbb{F} \neq 2,3$), Lemma \ref{L2} ($Char \mathbb{F} = 2$) and Lemma \ref{L3} ($Char \mathbb{F} =3$), i.e., find the solutions to the system of equations (\ref{Form2}).

\underline{Let $Char \mathbb{F} \neq 2,3$.}
In this case the system of equations (\ref{Form2}) is consistent only for pairs $(3)$,  $(12)$, $(15)$ and $(17)$. The solutions to the system are as follows:

For $(3)$ $\left(A_3(1,0,0), \begin{pmatrix}
a &0\\ 0 &d
\end{pmatrix}\right)\simeq \left(A_3(1,0,0), \begin{pmatrix}
a &0\\ 0 &t^2d
\end{pmatrix}\right)$, where $a, d, t\in \mathbb{F}, ad\neq 0, t\neq 0$
the system becomes an identity, therefore, the pair $(3)$ is a Frobenius algebra.

%$$ \left\lbrace\begin{array}{cc}
%
%0=0\\
%
%\end{array}\right.
%$$

%Our solution is :$(A_3(1,0,0), \begin{pmatrix}
%a &0\\ 0 &d
%\end{pmatrix})\simeq (A_3(1,0,0), \begin{pmatrix}
%a &0\\ 0 &t^2d
%\end{pmatrix})$, where $a,d, t\in , ad\neq 0, t \neq 0$.

Consider $(12)$ $\left(A_3(\frac{1}{2},0,\frac{1}{2}), \begin{pmatrix}
a &1\\ c &d
\end{pmatrix}\right)$, where $a,c,d\in \mathbb{F}$, $ad-c\neq 0$. As a solution the system (\ref{Form2}) we get $c=1$ and $d=0$ and the corresponding Frobenius algebras are $$\left(A_3\left(\frac{1}{2},0,\frac{1}{2}\right), \begin{pmatrix}
a &1\\ 1 &0
\end{pmatrix}\right), \ \mbox{where} \  a\in \mathbb{F}.$$

Among the pairs $(15)$ $\left(A_3\left(\frac{1}{2},\alpha_4,\frac{1}{2}\right), \begin{pmatrix}
a &b\\ c &d
\end{pmatrix}\right)\simeq \left(A_3\left(\frac{1}{2},\alpha_4,\frac{1}{2}\right), \begin{pmatrix}
a &-b\\ -c &d
\end{pmatrix}\right)$, where  $a,b,c,d,\alpha_4\in \mathbb{F}$, $ad-bc\neq 0$, $\alpha_4 \neq 0$ the algebras satisfying $ c=b,$ and $ d=2\alpha_4a$ are Frobenius. Thus, we get

%the system of equations looks like:
%
%$$ \left\lbrace\begin{array}{cc}
%
%\frac{c-b}{2}=0\\
%
%\alpha_4a-\frac{d}{2}=0\\
%
%\frac{c-b}{2}\alpha_4=0\\
%\end{array}\right.
%$$
%
%according that: $ c=b,$ and $ d=2\alpha_4a$

$$\left(A_3\left(\frac{1}{2},\alpha_4,\frac{1}{2}\right), \begin{pmatrix}
a &b\\ b &2\alpha_4a
\end{pmatrix}\right)\simeq \left(A_3\left(\frac{1}{2},\alpha_4,\frac{1}{2}\right), \begin{pmatrix}
a &-b\\ -b &2\alpha_4a
\end{pmatrix}\right),$$ where $a,b,\alpha_4\in \mathbb{F}$,  $2\alpha_4a^2-b^2\neq 0 ,\alpha_4 \neq 0 $.

Considering the system of equations (\ref{Form2}) for $(17)$ $\left(A_{13}, \begin{pmatrix}
0 &b\\ c &0
\end{pmatrix}\right)\simeq \left(A_{13},\begin{pmatrix}
	0 &p^3b\\ p^3c &0
\end{pmatrix}\right) $, where $p\neq 0$ we obtain $c=b$ and hence,
$$\left(A_{13}, \begin{pmatrix}
0 &b\\ b &0
\end{pmatrix}\right)\simeq \left(A_{13},\begin{pmatrix}
	0 &p^3b\\ p^3b &0
\end{pmatrix}\right) \ \mbox{where}\ b,p\in \mathbb{F} \ p\neq 0, \ \mbox{and}\  b^2\neq 0$$ is a Frobenius algebra.

\underline{Let now treat the case of $Char \mathbb{F} = 2$.}
From the pairs $(2)$ $\left(A_{12,2}, \begin{pmatrix}
	0 &b\\ c &0
	\end{pmatrix}\right)\simeq \left(A_{12,2},\begin{pmatrix}
	0 &p^3b\\ p^3c &0
	\end{pmatrix}\right) $, where $b,c,p\in \mathbb{F}$, $bc\neq 0, p\neq 0$ here we generate the following Frobenius algebras
$\left(A_{12,2}, \begin{pmatrix}
	0 &b\\ b &0
	\end{pmatrix}\right)\simeq \left(A_{12,2},\begin{pmatrix}
	0 &p^3b\\ p^3b &0
	\end{pmatrix}\right) $, where $b,p\in \mathbb{F}$, $p\neq 0,b^2\neq 0$ since the solution to the system of equations (\ref{Form2}) in this case is
$c=b.$

%according that: $ c=b, $
%
%Our solution is $(A_{12,2}, \begin{pmatrix}
%	0 &b\\ b &0
%	\end{pmatrix})\simeq (A_{12,2},\begin{pmatrix}
%	0 &p^3b\\ p^3b &0
%	\end{pmatrix}) $, where $b,p\in \mathbb{F}$, $p\neq 0,b^2\neq 0$.

For $(4)$ $\left(A_{11,2}(\beta_1), \begin{pmatrix}
a & 0\\ c &d
\end{pmatrix}\right)$, where $\beta_1,a,c,d\in \mathbb{F}$, $ad \neq 0$,

the solution to the system of equations (\ref{Form2}) is $ c=0$ and $a=\beta_1d$. Therefore, $\left(A_{11,2}(\beta_1), \begin{pmatrix}
\beta_1d & 0 \\ 0 &d
\end{pmatrix}\right)$ is a Frobenius algebra.

From $(6)$ $\left(A_{11,2}(\beta_1), \begin{pmatrix}
a &-\beta_1d \\ -\beta_1d &d
\end{pmatrix})\simeq (A_{11,2}(\beta_1), \begin{pmatrix}
ap^2-\beta_1^2d(p-1)^2 &-\beta_1d \\ -\beta_1d &d
\end{pmatrix}\right)$, where $\beta_1,a,d,p\in \mathbb{F}$,  $ad-\beta_1^2d^2 \neq 0 , p \neq 0$ subjecting to the system (\ref{Form2}) we get
%
%the system of equations looks like:
%
%$$ \left\lbrace\begin{array}{cc}
%
%a-\beta_1d=0\\
%
%\end{array}\right.
%$$
%
%according that:
$ a=\beta_1d$ and obtain the following Frobenius algebra $$\left(A_{11,2}(\beta_1), \begin{pmatrix}
\beta_1d &-\beta_1d \\ -\beta_1d &d
\end{pmatrix}\right)\simeq \left(A_{11,2}(\beta_1), \begin{pmatrix}
\beta_1dp^2-\beta_1^2d(p-1)^2 &-\beta_1d \\ -\beta_1d &d
\end{pmatrix}\right),\ \mbox{where}\ \beta_1,d,p\in \mathbb{F}, \beta_1d^2(1-\beta_1)\neq 0, p \neq 0.$$

For $(7)$ $\left(A_{11,2}(\beta_1), \begin{pmatrix}
a &1\\ c &0
\end{pmatrix}\right)$, where $\beta_1,a,c\in \mathbb{F}$,

the solution to the system of equations (\ref{Form2}) is $c=1$ and $ a=0.$ Therefore, $\left(A_{11,2}(\beta_1), \begin{pmatrix}
0 &1\\ 1 &0
\end{pmatrix}\right)$ is a Frobenius algebra.

Considering $(12)$  $\left(A_{4,2}(1,0,0), \begin{pmatrix}
a &1\\ -1 &0
\end{pmatrix}\right)$, where $a\in \mathbb{F}$, we obtain $ a=1,$ i.e.,
 $$\left(A_{4,2}(1,0,0), \begin{pmatrix}
1 &1\\ -1 &0
\end{pmatrix}\right)$$
is a Frobenius algebra here.

Let us consider $(13)$  $\left(A_{4,2}(1,\beta_1,0), \begin{pmatrix}
a &0\\ c &d
\end{pmatrix}\right)$, where $\beta_1 \neq 0$ and $ ad\neq 0$. Then the system of equations (\ref{Form2}) is equivalent to

$$ \left\lbrace\begin{array}{cc}
c=0\\
a-\beta_1d=0\\
\end{array}\right.
$$

Therefore, $\left(A_{4,2}(1,\beta_1,0), \begin{pmatrix}
\beta_1d &0\\ 0 &d
\end{pmatrix}\right)$, where $\beta_1,d\in \mathbb{F}, \beta_1d^2\neq 0$ are Frobenius algebras.

From $(15)$ $\left(A_{4,2}(1,\beta_1,0), \begin{pmatrix}
a &b\\ -b &0
\end{pmatrix}\right)$, where $\beta_1,a,b\in \mathbb{F}$, $\beta_1 \neq 0$ and $b\neq 0$ as a solution to the system (\ref{Form2})
we obtain $b=a$. Hence, $$\left(A_{4,2}(1,\beta_1,0), \begin{pmatrix}
b &b\\ -b&0
\end{pmatrix}\right), \  \mbox{where}\ \beta_1,b\in \mathbb{F},\ \beta_1 \neq0 \ \mbox{and}\ b\neq 0.$$
are Frobenius algebras.

For $(18)$ $\left(A_{3,2}(1,0,0), \begin{pmatrix}
a &0\\ 0 &d
\end{pmatrix}\right)\simeq \left(A_{3,2}(1,0,0), \begin{pmatrix}
a &0\\ 0 &t^2d
\end{pmatrix}\right)$, where $a, d, t\in \mathbb{F}, t\neq 0, ad\neq 0$ the system of equations (\ref{Form2}) is an identity and they are Frobenius algebras.

Note that for all the other classes from Lemma \ref{L2} the system of equations (\ref{Form2}) is inconsistent.

%$(A_{3,2}(1,0,0), \begin{pmatrix}
%a &0\\ 0 &d
%\end{pmatrix})\simeq (A_{3,2}(1,0,0), \begin{pmatrix}
%a &0\\ 0 &t^2d
%\end{pmatrix})$, where $a,d,t\in \mathbb{F},t\neq 0, ad\neq 0$.

\underline{Let $Char \mathbb{F} =3.$}
We verify the list of pairs from Lemma \ref{L3} to be Frobenius algebras.
From $(2)$  $\left(A_{13,3}, \begin{pmatrix}
0 &b\\ c&0
\end{pmatrix}\right)\simeq \left(\begin{pmatrix}
	0 &p^3b\\ p^3c&0
\end{pmatrix}\right) $, where $bc\neq 0$ as a solution to the system (\ref{Form2}) we get $c=b$ that means $\left(A_{13,3}, \begin{pmatrix}
0 &b\\ b &0
\end{pmatrix}\right) \simeq \left(\begin{pmatrix}
	0 &p^3b\\ p^3b &0
\end{pmatrix}\right) $, where $b, p\in \mathbb{F}, p\neq 0, $ $b^2\neq 0$ are Frobenius algebras.

All the pairs $(10)$ $\left(A_{3,3}(1,0,0), \begin{pmatrix}
a &0\\ 0 &d
\end{pmatrix}\right)\simeq \left(A_{3,3}(1,0,0), \begin{pmatrix}
a &0\\ 0 &t^2d
\end{pmatrix}\right)$, where $0\neq t\in \mathbb{F}, ad\neq 0$ are Frobenius algebras.

It is easy to see that among the pairs $(15)$ $\left(A_{3,3}(2,0,2), \begin{pmatrix}
a&b\\ c &d
\end{pmatrix}\right)$, where $ad-bc\neq 0$ Frobenius algebras are $$\left(A_{3,3}(2,0,2), \begin{pmatrix}
a&b\\ b &0
\end{pmatrix}\right),\ \mbox{where}\ a, b\in \mathbb{F}, \ b^2\neq 0$$

From the pairs $(16)$ $\left(A_{3,3}(2,\alpha_4,2), \begin{pmatrix}
a&0\\ c&d
\end{pmatrix}\right)$, where $ad\neq 0$ and $\alpha_4 \neq 0$, we generate the following Frobenius algebras  $$\left(A_{3,3}(2,\alpha_4,2), \begin{pmatrix}
a&0\\ 0 &2\alpha_4a
\end{pmatrix}\right),\ \mbox{where}\ \alpha_4,\ a\in \mathbb{F},\ 2\alpha_4a^2\neq 0.$$

Finally, considering $(17)$ $\left(A_{3,3}(2,\alpha_4,2), \begin{pmatrix}
0&b\\ c &0
\end{pmatrix}\right)$, where $\alpha_4 \neq 0$, $bc\neq 0, \ b+c\neq 0$,

we obtain the following Frobenius algebras $$\left(A_{3,3}(2,\alpha_4,2), \begin{pmatrix}
0&b\\ b &0
\end{pmatrix}\right),\ \mbox{where}\ \alpha_4,b\in \mathbb{F},\ \alpha_4 \neq 0,\ b\neq 0.$$

For all the other pairs the system (\ref{Form2}) is inconsistent.

\end{proof}

\section{Conclusion}
The study of Frobenius algebras continues to be a vibrant and active area of research. They are deeply connected to multiple areas of mathematics, from algebraic geometry and representation theory to mathematical physics and homological algebra. In this paper we classified all Frobenius algebra structures on two-dimensional vector space over any base field. Further exploration into their generalizations, homological structures, and physical applications remains a rich area for future research.

\section{Conflicts of Interest}
There is no conflict of interest to declare.

 	\end{document}